\theoremstyle{plain}
  \newtheorem{thm}{Theorem}[section]
  \newtheorem{prop}[thm]{Proposition}
  \newtheorem{lem}[thm]{Lemma}
  \newtheorem{cor}[thm]{Corollary}
\theoremstyle{definition}
  \newtheorem{dfn}[thm]{Definition}
  \newtheorem{exmp}[thm]{Example}
  \newtheorem{rem}[thm]{Remark}
\numberwithin{equation}{section}
\let\opn\operatorname %abbreviation of \operatorname
\newcommand\ba{\mathbf{a}}
\renewcommand\iff{\Longleftrightarrow}
\newcommand\bb{\mathbf{b}}
\newcommand\bd{\mathbf{d}}
\newcommand\NN{\mathbb{N}}
\newcommand\cF{\mathcal{F}}
\def\kk{\Bbbk}
\newcommand\wS{\widetilde{S}}
\newcommand\Ext{\opn{Ext}}
\newcommand\Ass{\opn{Ass}}
\newcommand\BoX{\opn{\mathsf{b-pol}}}
\newcommand\codim{\opn{codim}}
\newcommand\chara{\opn{char}}
\newcommand\pd{\opn{proj.\! dim}}
\newcommand\pol{\opn{\mathsf{pol}}}
\def\m{\mathsf{m}}
\def\n{\mathsf{n}}
\def\fp{\mathfrak{p}}
\def\fq{\mathfrak{q}}
\title{Alternative polarizations of Borel fixed ideals}
\author{Kohji Yanagawa}
\thanks{The author is partially supported by Grant-in-Aid for Scientific Research (c) (no.22540057).}
\address{Department of Mathematics, Kansai University,
Suita 564-8680, Japan}
\email{yanagawa@ipcku.kansai-u.ac.jp}
\subjclass[2000]{13C13, 13P05, 13F55}
\begin{document}
\maketitle
\begin{abstract}
For a monomial ideal $I$ of a polynomial ring $S$,  
a {\it polarization} of $I$ is a squarefree monomial ideal $J$ of a ``larger" 
polynomial ring $\wS$ such that $S/I$ is a quotient of $\wS/J$ by 
a (linear) regular sequence. We show  that a Borel fixed ideal admits a ``non-standard" polarization. 
For example, while the usual polarization sends $xy^2 \in S$ to $x_1y_1y_2 \in \wS$, 
ours sends it to $x_1y_2y_3$. Using this idea, 
we  recover/refine the results on {\it squarefree operation} in the 
shifting theory of simplicial complexes.   The present paper generalizes a result of Nagel and Reiner, 
while our approach is very different from theirs. 
\end{abstract}
\section{introduction}
Let $S:=\kk[x_1, \ldots, x_n]$ and $\wS:=\kk[ \, x_{i,j} \mid 1 \le i \le n, \, 1 \le j \le d \, ]$ 
be polynomial rings over a field $\kk$. Any monomial $\m \in S$ has a unique expression 
\begin{equation}\label{alpha expression}
\m = \prod_{i=1}^e x_{\alpha_i} \quad \text{with} \quad 1 \le \alpha_1 \le \alpha_2 \le \cdots 
\le \alpha_e \le n. 
\end{equation}
If $\deg (\m) \, (=e) \le d$, we set
\begin{equation}\label{b-pol}
\BoX(\m)= \prod_{i=1}^e x_{\alpha_i, i} \in \wS.
\end{equation} 
Note that $\BoX(\m)$ is a squarefree monomial.  
For a monomial ideal $I \subset S$, $G(I)$ denotes the set of minimal (monomial) 
generators of $I$. If $\deg(\m) \le d$ for all $\m \in G(I)$, we set 
$$\BoX(I):= (\BoX(\m) \mid \m \in G(I) )\subset \wS.$$ 

In Theorem~\ref{main}, we will show that if $I$ is {\it Borel fixed} (i.e.,  $\m \in I$, $x_i|\m$ and $j <i$  
imply $(x_j/x_i) \cdot \m \in I$), then $J:=\BoX(I)$ is a polarization of $I$, that is, 
$\Theta := \{ x_{i,1}-x_{i,j} \mid 1 \leq i \leq n, \, 2 \leq j \leq d \, \} \subset \wS$
forms a $\wS/J$-regular sequence with the canonical isomorphism $\wS/(J+(\Theta)) \cong S/I$. 
For general monomial ideals, the corresponding statement is not true. Even for a Borel fixed ideal, 
$\BoX$ is essentially different from the standard polarization, see Example~\ref{1st exm}.   
Recall that Borel fixed ideals play an important role in Gr\"obner basis theory and many related areas, 
since they appear as the {\it generic initial ideals} of homogeneous ideals (c.f. \cite[\S15.9]{E}). 

The idea of $\BoX(I)$ first appeared in Nagel and Reiner \cite{NR}, while they did not give a specific name to this 
construction. Among other things, under the additional assumption that all elements of $G(I)$ have the same degree, 
they have shown the above result (it is not directly stated there, but follows from 
\cite[Theorem~3.13]{NR}).  Inspired by this, Lohne~\cite{L} undertakes a study 
of all possible polarizations of certain monomial ideals.
He calls $\BoX(I)$ the {\it box polarization}, 
since combinatorial objects consisting of  ``boxes" are used in \cite{NR}. 
While the name ``box" is no longer natural in our case, we use the symbol $\BoX$.

To prove Theorem~\ref{main}, we show that $\wS/J$ has a {\it pretty clean filtration} introduced by 
Herzog and Popescu \cite{HP}, 
and is sequentially Cohen-Macaulay. 
Moreover, since $J$ is squarefree, the simplicial complex associated with $\wS/J$ 
is {\it non-pure shellable} in the sense of Bj\"orner and Wachs \cite{BW}.

%Hence $\Theta$ also forms an $\Ext_{\wS}^i(\wS/J,\wS)$-regular 
%sequence for all $i$, and we have $\wS/(\Theta) \otimes_{\wS} \Ext_{\wS}^i(\wS/J,\wS) 
%\cong \Ext_S^i(S/I,S)$.   While the standard polarizations always satisfy this proerty, general polarizations fail it.     

Inspired by Kalai's theory on the {\it algebraic shifting} of simplicial complexes (c.f. \cite{K}), 
Aramova, Herzog and Hibi \cite{AHH2} introduced the operation sending  
a monomial $\m \in S$ of \eqref{alpha expression} to the squarefree monomial 
$$\m^\sigma:= \prod_{i=1}^e x_{\alpha_i+i-1}$$
in a polynomial ring $T:= \kk[x_1, \ldots, x_N]$ with  $N \gg 0$. 
If $I \subset S$ is a Borel fixed monomial ideal, we can define the squarefree monomial ideal 
$I^\sigma \subset T$ in the natural way (this construction works for general monomial ideals, 
but is important for Borel fixed ideals).  
This operation has the remarkable property that $\beta^S_{i,j}(I)=\beta_{i,j}^T(I^\sigma)$ 
for all $i,j$, as shown in \cite{AHH2}. Here $\beta_{i,j}(-)$ denotes the graded Betti number as usual.  

In Section~4, we will study $I^\sigma$ through our polarization $J:=\BoX(I)$. In fact, $\Theta_1:=\{ \, x_{i,j} -x_{i+1, j-1} \mid 1 \le i <n,   1< j \le d \, \}$ 
also forms a $\wS/J$-regular sequence and we have $\wS/(J+(\Theta_1)) \cong T/I^\sigma$ 
(if we set the number $N$ of the variables of $T$ to be $n+d-1$). 
Hence we get  a new proof of the equation $\beta^S_{i,j}(I)=\beta_{i,j}^T(I^\sigma)$. 
Moreover, we have $\beta_{i,j}^T(\Ext_T^k(T/I^\sigma, T)) =\beta_{i,j}^S(\Ext_S^k(S/I, S))$ 
for all $i,j,k$. Murai (\cite{M}) has generalized the operation $(-)^\sigma$ so that the equations on 
the Betti numbers remain true. We can also understand his operation using $\BoX$. 
In fact, it is enough to change a $\wS/J$-regular sequence $\Theta' \subset \wS$.

\section{Preparation}
We introduce the convention and notation used throughout the paper. 
Let $S=\kk[x_1, \ldots, x_n]$ be a polynomial ring over a field $\kk$. 
The $i^{\rm th}$ coordinate of $\ba \in \NN^n$ is denote by $a_i$ 
(i.e., we change the font). For $\ba \in \NN^n$, $x^\ba$ denotes 
the monomial $\prod_{i=1}^n x_i^{a_i} \in S$.  For a monomial $\m:=x^\ba$, set 
$\deg (\m) :=\sum_{i=1}^n a_i$ and $\deg_i (\m) :=a_i$. 
We define the order $\succeq$ on $\NN^n$ so that 
$\ba \succeq \bb$ if $\ a_i \ge b_i$ for all $i$. 
We refer \cite{BH,E} for  unexplained terminology. 

Take $\bd \in \NN^n$ with  $d_i \geq 1$ for all $i$, and  
set  $$\wS:=\kk[\, x_{i,j} \mid 1 \leq i \leq n, 1 \leq j \leq d_i \,].$$ 
Note that $$\Theta := \{ x_{i,1}-x_{i,j} \mid 1 \leq i \leq n, \, 2 \leq j \leq d_i \, \} \subset \wS$$ 
forms a regular sequence with $\wS/(\Theta) \cong S$. 
Here the isomorphism is induced by the ring homomorphism $\phi:\wS \to S$ with 
$\phi(x_{i,j})=x_i$. Throughout this paper, $\wS$ and $\Theta$ are used in this meaning, 
while the choice of $\bd \in \NN^n$ depends on the context.  

\begin{dfn}\label{pol def}
For a monomial ideal $I \subset S$, 
a {\it polarization} of $I$ is a squarefree monomial ideal $J \subset \wS$ satisfying 
the following conditions. 
\begin{itemize}
\item[(i)] Through the isomorphism $S \to \wS/(\Theta)$, we have $\wS/(\Theta) \otimes_{\wS} \wS/J \cong S/I$.  
\item[(ii)]  $\Theta$ forms a $\wS/J$-regular sequence. 
\end{itemize}
\end{dfn}

Clearly, the condition (i) holds if and only if $\phi(J)=I$.  
The following is a well-known fact, and a proof is found in \cite[Lemma~6.9]{NR}.  

\begin{lem}[c.f. {\cite[Lemma~6.9]{NR}}]\label{NR} 
Let $I$ and $J$ be monomial ideals of $S$ and $\wS$ respectively. 
Assume that the condition (i) of Definition~\ref{pol def} is satisfied. 
Then the condition (ii) is equivalent to the following.
\begin{itemize}
\item[(ii')] $\beta_{i,j}^{\wS}(J)=\beta_{i,j}^S(I)$ for all $i,j$. 
\end{itemize}
\end{lem}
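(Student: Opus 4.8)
The statement is an equivalence and I would prove the two implications separately: $(\mathrm{ii})\Rightarrow(\mathrm{ii}')$ is the routine ``polarization preserves Betti numbers'' argument, whereas $(\mathrm{ii}')\Rightarrow(\mathrm{ii})$ I would extract from a Hilbert-series telescoping. Throughout I would use that $\beta_{i,j}^{\wS}(J)=\beta_{i,j}^S(I)$ for all $i,j$ is equivalent to $\beta_{i,j}^{\wS}(\wS/J)=\beta_{i,j}^S(S/I)$ for all $i,j$ (shift the resolution by one), that condition $(\mathrm{i})$ gives $\wS/(J+(\Theta))\cong S/I$, and that $\Theta$ is a regular sequence on $\wS$ with $\wS/(\Theta)\cong S$. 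Set $m:=|\Theta|$, so that $\wS$ has $m$ more variables than $S$.

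For $(\mathrm{ii})\Rightarrow(\mathrm{ii}')$: since $\Theta$ is $\wS$-regular, the Koszul complex $K_\bullet(\Theta;\wS)$ resolves $\wS/(\Theta)\cong S$ over $\wS$, so $\mathrm{Tor}_i^{\wS}(\wS/J,S)\cong H_i(K_\bullet(\Theta;\wS/J))$, which by $(\mathrm{ii})$ vanishes for $i\ge1$. Hence, taking a minimal graded free resolution $F_\bullet$ of $\wS/J$ over $\wS$, the complex $F_\bullet\otimes_{\wS}S$ is acyclic with $H_0\cong\wS/(J+(\Theta))\cong S/I$, i.e. it resolves $S/I$ over $S$; it stays minimal because $\phi$ carries the maximal ideal of $\wS$ into that of $S$. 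Comparing graded ranks gives $\beta_{i,j}^{\wS}(\wS/J)=\beta_{i,j}^S(S/I)$, i.e. $(\mathrm{ii}')$.

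For $(\mathrm{ii}')\Rightarrow(\mathrm{ii})$ I would argue by contradiction. First, $(\mathrm{ii}')$ forces the Hilbert-series numerators of $\wS/J$ (over $\wS$) and of $S/I$ (over $S$) to coincide, which since $\wS$ has $m$ more variables amounts to $(1-t)^{m}H_{\wS/J}(t)=H_{S/I}(t)$. Write $\Theta=\{\theta_1,\dots,\theta_m\}$, put $J_l:=J+(\theta_1,\dots,\theta_l)$ and $N_l:=(0:_{\wS/J_{l-1}}\theta_l)$; from the four-term exact sequence $0\to N_l(-1)\to(\wS/J_{l-1})(-1)\xrightarrow{\theta_l}\wS/J_{l-1}\to\wS/J_l\to0$ one reads off $H_{\wS/J_l}(t)=(1-t)H_{\wS/J_{l-1}}(t)+t\,H_{N_l}(t)$. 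If $\Theta$ were not $\wS/J$-regular, let $k$ be least with $N_k\ne0$; telescoping these identities (the first $k-1$ steps merely multiply by $1-t$), using $\wS/J_m\cong S/I$ and the displayed equality, and cancelling a factor $t$, yields the relation $\sum_{l=k}^{m}(1-t)^{m-l}H_{N_l}(t)=0$.

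The delicate point — and the one place a naive argument breaks down, since multiplying by $1-t$ destroys coefficientwise positivity — is to see this relation is impossible, and I would do it by inspecting the bottom degree. Each nonzero summand $(1-t)^{m-l}H_{N_l}(t)$ has lowest-degree coefficient $\dim_{\kk}(N_l)_{a_l}>0$, where $a_l$ is the initial degree of $N_l$, because the constant term of $(1-t)^{m-l}$ is $1$. Hence, with $a:=\min\{a_l:N_l\ne0\}$ (a genuine integer, as $N_k\ne0$), the coefficient of $t^{a}$ in $\sum_{l}(1-t)^{m-l}H_{N_l}(t)$ is a nonempty sum of positive integers and so is not $0$, a contradiction. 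Therefore $\Theta$ is $\wS/J$-regular, which is $(\mathrm{ii})$. One could instead run the converse through $\mathrm{Tor}_i^{\wS}(\wS/J,S)=H_i(K_\bullet(\Theta;\wS/J))$ and a change-of-rings/acyclicity-lemma argument, but the Hilbert-series route seems shortest and most self-contained.
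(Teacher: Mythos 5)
Your proof is correct. Note that the paper does not actually prove this lemma: it cites \cite[Lemma~6.9]{NR} and merely remarks that the argument there, written for $\#\Theta=1$, ``works in the general case.'' What you have written is precisely the missing general-case argument. The forward implication via the Koszul complex and base change of a minimal resolution is standard and fine (the minimality claim holds because the $\theta$'s are homogeneous of degree $1$, so the specialized differentials still land in the irrelevant maximal ideal). The converse is the substantive part, and your telescoping of the four-term sequences $0\to N_l(-1)\to(\wS/J_{l-1})(-1)\to\wS/J_{l-1}\to\wS/J_l\to0$ together with the bottom-degree positivity argument correctly handles the one genuine pitfall, namely that multiplying by $1-t$ destroys coefficientwise nonnegativity; looking only at the initial degree $a=\min\{a_l: N_l\neq 0\}$ sidesteps this cleanly. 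This is exactly in the spirit of the paper's own remark following the lemma that one should ``compare the Hilbert series of $\wS/J$ with that of $S/I$,'' so your route is the intended one, carried out in full rather than delegated to the reference.
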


While the proof in \cite{NR} concerns only the case $\# \Theta=1$, it works in the general case.  
If $\Theta$ does not form a $\wS/J$-regular sequence, the relation between $\beta_{i,j}^{\wS}(J)$ and 
$\beta_{i,j}^S(I)$ is not simple. So it is better to compare the Hilbert series of $\wS/J$ 
with that of $S/I$ (recall that the Hilbert series is determined by the Betti numbers.) 

\medskip

For a monomial $x^\ba$ with $\ba \preceq \bd$, set 
$$\pol(x^\ba):=\prod_{1 \le i \le n} 
x_{i,1} x_{i,2} \cdots x_{i, a_i} \in \wS.$$ 
Let $I \subset S$ be a monomial ideal 
with $\ba \preceq \bd$ for all $x^\ba \in G(I)$.  
Here $G(I)$ denotes the set of minimal (monomial) generators of $I$. 
Then it is well-known that 
$$\pol(I)=( \, \pol(x^\ba) \mid x^\ba \in G(I) \, )$$
gives a polarization of $I$, which is called the {\it standard polarization}. 
(If the reader is nervous about the choice of $\bd \in \NN^n$,  take it so that  $x^{\bd}$ is the least common multiple 
of the minimal generators of $I$. Anyway, for the properties considered in this paper, the choice of $\bd$ 
is not essential.) 
While all monomial ideals have the standard polarizations, some have alternative ones. 

\medskip 

Let $d$ be a positive integer, and set 
\begin{equation}\label{wS for box}
\wS:=\kk[ \, x_{i,j} \mid 1 \le i \le n, \, 1 \le j \le d \, ]. 
\end{equation}
For a monomial $x^\ba \in S$ with $e:=\deg  (x^\ba) \leq d$, set 
$b_i:=\sum_{j=1}^ia_j$ for each $i\geq 0$ (here $b_0=0$), and 
$$\BoX(x^\ba):=\prod_{\substack{1 \leq i \leq n \\b_{i-1}+1 \leq j \leq b_i }}x_{i,j} \in \wS.$$
If $a_i=0$ then $b_{i-1}=b_i$ and $x_{i,j}$ does not divide $\BoX(x^\ba)$ for all $j$. 
If $\m =x^\ba \in S$ is the monomial of \eqref{alpha expression}, 
then we have $b_i = \max\{ \, j \mid \alpha_j \le i \, \}$ and the above definition 
of $\BoX(x^\ba)$ coincides with the one given in \eqref{b-pol}.

Let $I \subset S$ be a monomial ideal with $\deg (x^\ba) \le d$ for all $x^\ba \in G(I)$.  Set 
$$\BoX(I):=( \, \BoX(x^\ba) \mid x^\ba \in G(I) \, ) \subset \wS.$$ 
Occasionally, this ideal gives a polarization of $I$. 
Note that the condition (i) of Definition~\ref{pol def} is always satisfied,  
and the problem is the condition (ii).  

In the sequel, when we treat $\BoX(I)$, we assume that $\wS$ is the one in \eqref{wS for box} and  
$\deg (\m) \le d$ for all $\m \in G(I)$.

\begin{exmp}\label{1st exm}
(1) For $I=(x^2, xy,  xz, y^2, yz) \subset \kk[x,y,z]$, 
we have 
$$\BoX(I)=(x_1x_2, x_1y_2,  x_1z_2, y_1y_2, y_1z_2),$$
and it gives a polarization. In fact, since $I$ is Borel fixed, we can use Theorem~\ref{main} below. 
It is essentially different from the standard polarization 
$$\pol(I)=(x_1x_2, x_1y_1,  x_1z_1, y_1y_2, y_1z_1).$$ 
More precisely, $\BoX(I)$ and $\pol(I)$ are different even after permutation of variables.   
%In fact, if $\m \in G(\pol(I))$ is divided by $y_i$ for some $i$, then $y_1$ divides $\m$. 
%However, $y_1$ does not divide $x_1y_2 \in G(\BoX(I))$ and 
%$y_2$ does not divide $y_1z_2 \in G(\BoX(I))$. 

(2) In general, $\BoX(I)$ does not give a polarization. 
For example, if 
$I=(xyz,x^2y,xy^2,x^3)$, then $\BoX(I)=(x_1y_2z_3, x_1x_2y_3,x_1y_2y_3,x_1x_2x_3)$,  
and it is not  a polarization. To see this, use Lemma~\ref{NR}. 
Note that $I$ is a {\it stable} monomial ideal, and Borel fixed ideals 
are nothing other than {\it strongly stable} monomial ideals (see \cite{AHH} for the definitions). 
\end{exmp}

\begin{dfn}\label{faithful def}
We say a polarization $J$ of $I$ is {\it faithful}, if $\Theta$ 
forms an $\Ext_{\wS}^i(\wS/J, \wS)$-regular sequence for all $i$. 
\end{dfn}

If  a polarization $J$ of $I$ is faithful, then we have 
$$ \wS/(\Theta) \otimes_{\wS} \Ext_{\wS}^i(\wS/J, \wS) \cong  \Ext_S^i(S/I, S).$$ 
In fact, the long exact sequences of $\Ext_{\wS}^\bullet(-, \wS)$ yield 
$$\wS/(\Theta) \otimes_{\wS} \Ext_{\wS}^i(\wS/J, \wS)  \cong \Ext_{\wS}^{i+(\#\Theta)}(\wS/(J+(\Theta)),\wS).$$
Since $\Theta \subset \wS$ forms a $\wS$-regular sequence with $\wS/(J+(\Theta)) \cong S/I$, 
we have 
$$\Ext_{\wS}^{i+(\#\Theta)}(\wS/(J+(\Theta)),\wS) \cong \Ext_S^i(S/I,S).$$

Hence, if $J$ is faithful,  $\Ext_S^i(S/I, S)$ and $\Ext_{\wS}^i(\wS/J, \wS)$ have  
the same degree  and Betti numbers. So $S/I$ and $\wS/J$ have the same arithmetic degree in this case. 

\begin{rem}
For any $I$, the standard polarization is always faithful by \cite[Corollary~4.10]{Sb} 
(see also \cite[Theorem~4.4]{Y}). It is an easy exercise to show that if $S/I$ is Cohen-Macaulay, then 
any polarization of $I$ is faithful. In Lemma~\ref{seqCM2} below, we will generalize this fact. 
\end{rem}

\begin{exmp}\label{non faithful}
For the ideal $I:=(x^2y, x^2z, xyz, xz^2, y^3, y^2z,yz^2)$ of $S:=\kk[x,y,z]$,  
$J:=\BoX(I) \subset \wS$ gives a polarization (to see this, compute the Betti numbers). 
However, $\deg \Ext_S^3(S/I,S)=6$ and 
$\deg \Ext^3_{\wS}(\wS/J,\wS)=5$. Hence $J$ is not faithful.    
\end{exmp}

%The following lemmata concerning the sequentially Cohen-Macaulay ideals (or modules) are easy. 
%So we omit the proofs here.  

Let $M$ be a finitely generated $S$-module. 
We say $M$ is {\it sequentially Cohen-Macaulay} if $\Ext_S^{n-i}(M,S)$ 
is either a Cohen-Macaulay module of dimension $i$ or the 0 module for all $i$. 
The original definition is given by the existence of a certain filtration 
(see \cite[III, Definition~2.9]{St}), 
however it is equivalent to the above one by \cite[III, Theorem~2.11]{St}.

\begin{lem}\label{seqCM}
Let $M$ be a sequentially Cohen-Macaulay $S$-module, and $y \in S$ a non-zero divisor of 
$M$. Then $y$ is a non-zero divisor of $\Ext_S^i(M,S)$ for all $i$, and  
$M/yM$ is a sequentially Cohen-Macaulay module with 
$$\Ext_S^{i+1}(M/yM,S) \cong \Ext_S^i(M,S)/y \cdot \Ext_S^i(M,S).$$ 
Moreover, we have 
$$\Ass(M/yM)=\{ \, \fp \mid \text{$\fp$ is a minimal prime of $\fp'+(y)$ 
for some $\fp' \in \Ass(M)$} \, \}.$$
If $y \in S_1$, and all associated primes of $M$ are generated by elements in $S_1$, then 
$$\Ass(M/yM)=\{ \, \fp'+(y) \mid \fp' \in \Ass(M) \, \}.$$
\end{lem}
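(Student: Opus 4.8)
The plan is to reduce every assertion to standard properties of sequentially Cohen--Macaulay (SCM) modules together with local duality, and to handle the two $\Ass$ statements by a purely primary-decomposition argument. First I would recall that, by definition, $M$ being SCM means $\Ext_S^{n-i}(M,S)$ is either $0$ or Cohen--Macaulay of dimension $i$ for every $i$; equivalently, $M$ carries a filtration $0=M_0\subset M_1\subset\cdots\subset M_r=M$ with $M_t/M_{t-1}$ Cohen--Macaulay and $\dim M_t/M_{t-1}$ strictly increasing. Since $y$ is a nonzerodivisor on $M$, it is a nonzerodivisor on each $M_t$ (submodules of $M$), hence on each quotient $M_t/M_{t-1}$; therefore $M_t/yM_t$ is Cohen--Macaulay of dimension $\dim M_t-1$ whenever $M_t\neq 0$, and $\{M_t/yM_t\}$ is a filtration of $M/yM$ with Cohen--Macaulay quotients of strictly increasing dimension. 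This shows $M/yM$ is SCM.

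For the $\Ext$ statements I would pass through local duality. Applying $\Ext^{\bullet}_S(-,S)$ (equivalently, the graded local cohomology $H^{\bullet}_{\mathfrak m}(-)$ and Matlis duality) to the short exact sequence $0\to M\xrightarrow{y} M\to M/yM\to 0$ yields the long exact sequence
\[
\cdots\to \Ext_S^i(M,S)\xrightarrow{\,y\,}\Ext_S^i(M,S)\to \Ext_S^{i+1}(M/yM,S)\to \Ext_S^{i+1}(M,S)\xrightarrow{\,y\,}\cdots.
\]
So it suffices to prove that $y$ is a nonzerodivisor on $\Ext_S^i(M,S)$ for all $i$; then the connecting maps split the sequence into short exact sequences $0\to \Ext_S^i(M,S)/y\,\Ext_S^i(M,S)\to \Ext_S^{i+1}(M/yM,S)\to 0$, giving the displayed isomorphism. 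To see the nonzerodivisor claim, note $\Ext_S^{n-i}(M,S)$ is either $0$ or Cohen--Macaulay of dimension $i$, so its associated primes all have dimension $i$; it is therefore enough to check that $y$ avoids every $\mathfrak p\in\bigcup_i\Ass \Ext_S^{n-i}(M,S)$. But by a standard fact each such $\mathfrak p$ is actually an associated prime of $M$ itself (the associated primes of the $\Ext$ modules of an SCM module are exactly the associated primes of $M$, sorted by dimension — this follows from the filtration, since $\Ass M=\bigcup_t\Ass(M_t/M_{t-1})$ and each graded piece is Cohen--Macaulay), and $y$ is a nonzerodivisor on $M$, hence lies in no $\mathfrak p\in\Ass M$. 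This is the step I expect to be the main technical point: pinning down $\Ass\Ext_S^{n-i}(M,S)\subseteq\Ass M$ cleanly, rather than just controlling dimensions.

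Finally, for the two formulas describing $\Ass(M/yM)$: localize. From $0\to M\xrightarrow{y}M\to M/yM\to 0$ we get, for any prime $\mathfrak p$, that $\mathfrak p\in\Ass(M/yM)$ forces $\opn{depth}(M/yM)_{\mathfrak p}=0$, hence $\opn{depth}M_{\mathfrak p}\le 1$; combined with $y\in\mathfrak p$ being $M_{\mathfrak p}$-regular, this says $M_{\mathfrak p}$ has depth exactly $1$ and $\mathfrak p/(\mathfrak p')$ has height $1$ over some $\mathfrak p'\in\Ass M$ with $\mathfrak p'\subseteq\mathfrak p$ — i.e.\ $\mathfrak p$ is a minimal prime of $\mathfrak p'+(y)$. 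Conversely, such a $\mathfrak p$ does lie in $\Ass(M/yM)$ because $M_{\mathfrak p'}$ has an associated embedded copy of $S/\mathfrak p'$, on which passing mod $y$ produces a depth-zero localization at $\mathfrak p$; here one uses that $M$ is SCM so localizing at a minimal prime of $\mathfrak p'+(y)$ does not create unexpected higher-depth behaviour. For the refined statement, if $y\in S_1$ and every $\mathfrak p'\in\Ass M$ is generated by linear forms, then $\mathfrak p'+(y)$ is already prime (a sum of monomial/linear primes generated by a subset of the variables, after a linear change of coordinates), so the set of minimal primes of $\mathfrak p'+(y)$ is the single prime $\mathfrak p'+(y)$, giving $\Ass(M/yM)=\{\mathfrak p'+(y)\mid \mathfrak p'\in\Ass M\}$. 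I would present the forward inclusions via depth/local-duality and the reverse inclusions by exhibiting explicit associated primes, so the whole lemma rests only on the filtration characterization of SCM modules and elementary localization.
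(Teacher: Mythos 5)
Your overall plan reaches all the conclusions, but it is organized differently from the paper and has one genuine soft spot. For the first half, the paper does not use the filtration characterization at all: it quotes the standard fact (\cite[Theorem~8.1.1]{BH}) that a prime $\fp$ of codimension $c$ lies in $\Ass(N)$ if and only if it is a minimal (equivalently, associated) prime of $\Ext_S^c(N,S)$, deduces $\Ass(M)=\bigcup_i\Ass(\Ext_S^i(M,S))$ from sequential Cohen--Macaulayness, and then reads off the nonzerodivisor claim, the displayed isomorphism, and the sequential Cohen--Macaulayness of $M/yM$ from the long exact sequence --- essentially your second paragraph. Your filtration route for showing $M/yM$ is sequentially Cohen--Macaulay also works, but note that ``$y$ is a nonzerodivisor on $M_t$, hence on $M_t/M_{t-1}$'' is not a valid implication by itself (take $M=S$, $M_1=(y)$); it is rescued only by the equality $\Ass M=\bigsqcup_t\Ass(M_t/M_{t-1})$ for the dimension filtration, which you invoke only later and for a different purpose.

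The real gap is in the forward inclusion of the $\Ass(M/yM)$ formula. From $\fp\in\Ass(M/yM)$ you correctly get $y\in\fp$ and $\operatorname{depth}M_\fp=1$, but the next step --- ``hence there is $\fp'\in\Ass M$ with $\fp'\subseteq\fp$ and $\operatorname{ht}(\fp/\fp')=1$'' --- is false for arbitrary $M$: if $M=S/\fp'$ with $\fp'$ a non--Cohen--Macaulay prime of codimension $2$ and $\operatorname{depth}M_{\mathfrak m}=1$ (e.g.\ the cone over a non-arithmetically-CM space curve), the only associated prime has coheight $2$ under $\mathfrak m$, and the claimed description of $\Ass(M/yM)$ actually fails. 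So sequential Cohen--Macaulayness must enter exactly here, via the facts that localizations of sequentially Cohen--Macaulay modules are again such and that their depth equals the minimal dimension of an associated prime; you assert that the hypothesis is used but attach the appeal to the converse inclusion, where Ischebeck's inequality $\operatorname{depth}M_\fp\le\dim S_\fp/\fp'S_\fp$ already suffices with no hypothesis on $M$. The paper sidesteps all of this by computing $\Ass(M/yM)$ through the Cohen--Macaulay modules $\Ext_S^c(M/yM,S)\cong\Ext_S^{c-1}(M,S)/y\,\Ext_S^{c-1}(M,S)$ together with the same fact from \cite{BH}, so that associated primes are minimal primes of a support of known codimension and no localization lemma is needed. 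If you supply the two localization/depth lemmas for sequentially Cohen--Macaulay modules, your argument closes; as written it assumes them.
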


To prove this lemma, recall the following basic properties of 
a finitely generated module $N$ over $S$ (c.f. \cite[Theorem~8.1.1]{BH}).  
\begin{itemize}
\item[(1)] $\dim_S (\Ext^i_S (N,S)) \leq n-i$ for all $i$. 
\item[(2)] For a prime ideal $\fp \subset S$ of codimension $c$, 
$\fp \in \Ass(N)$ if and only if $\fp$ is an associated (equivalently, minimal) 
prime of $\Ext_S^c(N,S)$. 
\end{itemize}

\begin{proof}
By the above remark, we have $\Ass (M)=\bigcup_i \Ass(\Ext_S^i(M,S))$. 
Hence the former half of the lemma is easy. 
To see the next assertion, let $\fp \subset S$ be a prime ideal of codimension $c$. 
Then we have;
\begin{eqnarray*}
\fp \in \Ass(M/yM) &\iff& 
\fp S_{\fp} \in \Ass_{S_{\fp}}(\Ext_S^c(M/yM,S) \otimes_S S_{\fp})\\
&\iff& \dim_{S_{\fp}} (\Ext_S^{c-1}(M,S) \otimes_S S_{\fp} )=n-c+1 \, \text{and} \, y \in \fp\\
&\iff& \text{$\exists$ $\fp' \in \Ass(\Ext_S^{c-1}(M,S))$ with $\codim \fp'=c-1$}\\ 
&\ &  \text{$\fp' \subset \fp$ and $y \in \fp$}\\
&\iff& \text{$\exists$ $\fp' \in \Ass(M)$ with $\codim \fp'=c-1$, $\fp' \subset \fp$ and $y \in \fp$}\\ 
&\iff& \text{$\exists$ $\fp' \in \Ass(M)$ such that $\fp$ is a minimal prime of $\fp' +(y)$.}
\end{eqnarray*}
The last assertion of the lemma is clear now, since $\fp'+(y)$ is a prime ideal for all $\fp' \in \Ass(M)$ 
in this case. 
\end{proof}

\begin{lem}\label{seqCM2}
Let $J$ be a polarization of $I$. 
If $\wS/J$ is sequentially Cohen-Macaulay, then so is $S/I$, and $J$ is faithful. 
\end{lem}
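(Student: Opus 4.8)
The plan is to leverage Lemma~\ref{seqCM} repeatedly, peeling off the regular sequence $\Theta$ one element at a time. Recall that $\Theta = \{x_{i,1}-x_{i,j} \mid 1 \le i \le n,\ 2 \le j \le d_i\}$ and that, by hypothesis, $J$ is a polarization of $I$, so $\Theta$ is a $\wS/J$-regular sequence of linear forms with $\wS/(J+(\Theta)) \cong S/I$. Write $\Theta = \{\theta_1, \ldots, \theta_m\}$ in some order, set $\wS_0 := \wS$, $M_0 := \wS/J$, and inductively $\wS_k := \wS_{k-1}/(\theta_k)$, $M_k := M_{k-1}/\theta_k M_{k-1}$. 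Since each $\theta_k$ is a non-zerodivisor on $M_{k-1}$ (regularity of the sequence) and each $\wS_k$ is again a polynomial ring (quotient of a polynomial ring by a linear form that is part of a coordinate system), we are exactly in the situation of Lemma~\ref{seqCM} at every stage.

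First I would establish the sequential Cohen-Macaulayness of $S/I$: starting from $M_0 = \wS/J$ which is sequentially Cohen-Macaulay by assumption, apply Lemma~\ref{seqCM} $m$ times to conclude that each $M_k$ is sequentially Cohen-Macaulay over $\wS_k$; in particular $M_m = \wS/(J+(\Theta)) \cong S/I$ is sequentially Cohen-Macaulay over $S$. One small point to check here is that Lemma~\ref{seqCM} is stated for $y \in S$ a non-zerodivisor on $M$ but we are applying it with the ambient ring changing from $\wS_{k-1}$ to $\wS_k$; this is harmless since $\wS_{k-1}$ is itself a polynomial ring in the relevant variables and $\theta_k$ can be taken as one of its coordinates (after the obvious change of variables $x_{i,j} \mapsto x_{i,j}$, keeping $x_{i,1}$), so the lemma applies verbatim at each step.

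Next I would prove faithfulness, i.e.\ that $\Theta$ is an $\Ext_{\wS}^i(\wS/J,\wS)$-regular sequence for all $i$. This is immediate from the same induction: Lemma~\ref{seqCM} asserts precisely that if $M$ is sequentially Cohen-Macaulay and $y$ is a non-zerodivisor on $M$, then $y$ is a non-zerodivisor on $\Ext^i(M,S)$ for all $i$. Applying this at stage $k$ (using that $M_{k-1}$ is sequentially Cohen-Macaulay, which we just proved, and that $\theta_k$ is a non-zerodivisor on $M_{k-1}$), we get that $\theta_k$ is a non-zerodivisor on $\Ext_{\wS_{k-1}}^i(M_{k-1},\wS_{k-1})$ for every $i$. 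Unwinding the definition of faithfulness via the iterated isomorphism $\Ext_{\wS_{k}}^{i}(M_k,\wS_k) \cong \Ext_{\wS_{k-1}}^{i+1}(M_{k-1},\wS_{k-1})/\theta_k\,\Ext_{\wS_{k-1}}^{i+1}(M_{k-1},\wS_{k-1})$ (the shift formula in Lemma~\ref{seqCM}, adjusted for the drop in dimension of the ambient ring) shows that $\Theta$ acts as a regular sequence on $\Ext_{\wS}^i(\wS/J,\wS)$, which is exactly the definition of faithful.

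The main obstacle is bookkeeping rather than any genuine difficulty: one must be careful that sequential Cohen-Macaulayness is needed \emph{before} invoking the non-zerodivisor conclusion of Lemma~\ref{seqCM} at each step, so the two inductions (on sequential CM-ness and on the $\Ext$-regularity) must be run together, stage by stage, rather than separately. There is also the minor subtlety of confirming that successively killing the $\theta_k$ keeps the quotient ring a polynomial ring so that Lemma~\ref{seqCM} (phrased over a polynomial ring $S$) stays applicable; this holds because the $x_{i,1}-x_{i,j}$ form part of a linear system of parameters that can be completed to a coordinate system of $\wS$. Once these points are arranged, both claims of the lemma fall out of the induction.
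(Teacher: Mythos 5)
Your proposal is correct and is essentially the paper's own argument: the paper simply says the lemma ``follows from the first assertion of Lemma~\ref{seqCM}'', i.e.\ one iterates that lemma over the elements of $\Theta$ to get both the sequential Cohen--Macaulayness of $S/I$ and the regularity of $\Theta$ on every $\Ext_{\wS}^i(\wS/J,\wS)$. The only (harmless) extra machinery you introduce is changing the ambient ring at each step; since Lemma~\ref{seqCM} already states the conclusion with $\Ext_S^{i+1}(M/yM,S)\cong \Ext_S^i(M,S)/y\,\Ext_S^i(M,S)$ computed over the \emph{same} ring $S$, the whole induction can be run over $\wS$ and the bookkeeping about quotient polynomial rings is not needed.
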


\begin{proof} 
Follows from the first assertion of Lemma~\ref{seqCM}. 
\end{proof}

\begin{rem}
Even if $S/I$ is sequentially Cohen-Macaulay, a polarization $J$ is not 
necessarily faithful. In fact, $S/I$ of Example~\ref{non faithful} is  sequentially Cohen-Macaulay.  
\end{rem}

\begin{dfn}\label{pretty clean}
Let $M$ be an $S$-module, and let 
$$\cF: 0=M_0 \subset M_1 \subset M_2 \subset \cdots \subset M_t =M$$ 
be a prime filtration, that is, there is a prime ideal $\fp_i$ such that  
$M_i/M_{i-1} \cong S/\fp_i$ for each $1 \le i \le t$. Herzog and Popescu (\cite{HP}) call   
the filtration $\cF$ is {\it pretty clean} if $i< j$ and $\fp_i \subseteq \fp_j$ imply $\fp_i=\fp_j$.  
\end{dfn}

For example, if $\codim \fp_i \ge \codim \fp_j$ for all $i, j$ with $i < j$, then $\cF$ is pretty clean. 
By \cite[Theorem~4.1 and Corollary~3.4]{HP}, if $M$ admits a pretty clean filtration $\cF$ 
then $M$ is sequentially Cohen-Macaulay and $\Ass M =\{ \, \fp_i \mid 1 \le i \le t \, \}$.  

\section{Main Results}
We say that a monomial ideal $I$ is {\it Borel fixed}, if $\m \in I$, $x_i|\m$ and $j <i$  
imply $(x_j/x_i) \cdot \m \in I$.  If $\chara(\kk) >0$, this terminology  
is unnatural (see \cite[\S 15.9.2]{E} for detail), and the terms  {\it 0-Borel fixed ideals} or {\it strongly stable monomial ideals} 
are also used in literature.  However, we just call it a Borel fixed ideal for simplicity.

For a monomial $\m \in S$, set 
$$\nu(\m):=\max \{ \, i \mid \text{$x_i$ divides $\m$} \,\}.$$
Similarly, for a monomial ideal $I \subset S$, set $\nu(I):=\max \{ \, \nu(\m) \mid \m \in G(I) \, \}$. 
If $I$ is Borel fixed, it is well know that 
$\nu(I)=\pd_S (S/I)$ (c.f. \cite[Corollary~15.25]{E}), while we do not use this fact.

\begin{lem}\label{gens of bpol}
If $I$ is a Borel fixed ideal (with $\deg (\m) \le d$ for all $\m \in G(I)$),  then 
$$\BoX(I)=( \, \BoX(\m) \mid \text{$\m \in I$ with $\deg(\m) \leq d$}\, ).$$
\end{lem}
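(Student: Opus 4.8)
The plan is to show the two inclusions of the asserted equality of ideals. One containment is trivial: since $G(I) \subseteq \{\, \m \in I \mid \deg(\m) \le d \,\}$, the ideal on the right clearly contains $\BoX(I)$. The content is the reverse inclusion, i.e. that for every monomial $\m \in I$ with $\deg(\m) \le d$, the squarefree monomial $\BoX(\m)$ lies in $\BoX(I)$. First I would reduce to the case where $\m$ is itself divisible by some $\m' \in G(I)$ — which is automatic, since $\m \in I$ means $\m = \m' \cdot \n$ for some $\m' \in G(I)$ and some monomial $\n \in S$ (possibly $\n = 1$). So it suffices to fix $\m' \in G(I)$ and a monomial $\n$ with $\deg(\m' \n) \le d$, and show $\BoX(\m') \mid \BoX(\m'\n)$, because then $\BoX(\m'\n)$ is a multiple of a generator of $\BoX(I)$.

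Next I would further reduce to the case $\n = x_k$, a single variable, and then iterate: if $\BoX(\m') \mid \BoX(\m' x_k)$ whenever $\deg(\m' x_k) \le d$, and more generally $\BoX(\mu) \mid \BoX(\mu x_k)$ for any monomial $\mu$ and any $k$ with $\deg(\mu x_k) \le d$, then multiplying $\m'$ by $\n$ one variable at a time gives a chain of divisibilities $\BoX(\m') \mid \BoX(\m' x_{k_1}) \mid \BoX(\m' x_{k_1} x_{k_2}) \mid \cdots \mid \BoX(\m'\n)$. Note this last step, multiplying by a single variable, does \emph{not} even need the Borel-fixed hypothesis — it is a purely combinatorial statement about $\BoX$ applied to arbitrary monomials. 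So the real question becomes: given $\mu = x^\ba$ and $k$, compare $\BoX(x^\ba)$ with $\BoX(x^\ba \cdot x_k)$.

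The heart of the argument is this comparison, and here I would work directly with the definition via partial sums $b_i = \sum_{j\le i} a_j$. Passing from $\ba$ to $\ba + e_k$ leaves $b_i$ unchanged for $i < k$ and increases $b_i$ by $1$ for $i \ge k$. Writing out which variables $x_{i,j}$ divide each of the two $\BoX$-images: for $i<k$ the blocks $\{\,b_{i-1}+1,\dots,b_i\,\}$ are identical, so the same $x_{i,j}$ appear; for $i=k$ the block grows from $\{\,b_{k-1}+1,\dots,b_k\,\}$ to $\{\,b_{k-1}+1,\dots,b_k+1\,\}$, picking up one extra variable $x_{k,\,b_k+1}$; and for $i>k$ each block $\{\,b_{i-1}+1,\dots,b_i\,\}$ is shifted up by $1$ to $\{\,b_{i-1}+2,\dots,b_i+1\,\}$ — but the crucial point is that these shifted blocks still consist of variables in rows $i>k$, and one checks that the \emph{set} $\{x_{i,j}\}$ appearing for $i > k$ is unchanged, because consecutive blocks in a fixed row... wait, no: the blocks are in \emph{different} rows, so a shift genuinely changes the second index. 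Here is where the Borel-fixed condition must re-enter: a naive single-variable multiplication need \emph{not} preserve divisibility of $\BoX$-images (cf. Example~\ref{1st exm}(2), where $\BoX$ fails to be a polarization for a merely stable ideal). So the correct reduction is not to arbitrary single-variable multiplication but to multiplication by $x_k$ \emph{followed by} using the Borel moves $(x_j/x_i)$, $j<i$, to push the new variable down to an index where the block structure absorbs it cleanly; concretely, given $\m \in I$ of degree $\le d$, one uses Borel moves to rewrite $\m$ as a monomial whose support is an initial segment in the variable order and then tracks $\BoX$ along that normal form. The main obstacle I anticipate is precisely organizing this bookkeeping: identifying the right normal form for a monomial in a Borel-fixed ideal and verifying that each elementary Borel move $\m \mapsto (x_j/x_i)\m$ (for $j<i$, $x_i \mid \m$) induces a divisibility $\BoX((x_j/x_i)\m) \mid \BoX(\m)$ or vice versa in the direction needed — this requires a careful case analysis of how the partial sums $b_i$ change under such a move and which blocks of indices shrink, grow, or shift.
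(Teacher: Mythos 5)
There is a genuine gap, and in fact your opening reduction is already false as stated. You claim it suffices to fix the generator $\m'\in G(I)$ dividing $\m$ and prove $\BoX(\m')\mid\BoX(\m'\n)$. But divisibility of $\BoX$-images by the \emph{same} generator fails even in trivial Borel fixed examples: for $I=(x_1,x_2)$, $\m'=x_2$ and $\m=x_1x_2$ we have $\BoX(x_2)=x_{2,1}$ and $\BoX(x_1x_2)=x_{1,1}x_{2,2}$, so $\BoX(\m')\nmid\BoX(\m)$ --- yet $\BoX(\m)\in\BoX(I)$ because a \emph{different} generator's image, $x_{1,1}=\BoX(x_1)$, divides it. The correct statement is only that \emph{some} element of $G(\BoX(I))$ divides $\BoX(\m)$, so any argument that tracks a single fixed generator is doomed. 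You do later notice that single-variable multiplication does not preserve divisibility of $\BoX$-images (the blocks in rows above the inserted variable shift), but your proposed repair --- putting $\m$ into a ``normal form whose support is an initial segment'' via Borel moves --- is both vague and not the right move; you explicitly leave the resulting bookkeeping unresolved, so the proof is incomplete.

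The missing idea is to run the induction downward on degree, always stripping off the \emph{largest} variable. The one case where block-shifting causes no trouble is $k=\nu(\mu)$: if $l:=\nu(\m)$ and $e:=\deg(\m)$, then $\BoX(\m)=x_{l,e}\cdot\BoX(\m/x_l)$ exactly, since the partial sums $b_i$ for $i<l$ are untouched and rows $i>l$ are empty. The Borel fixed hypothesis enters only to guarantee that $\m/x_l\in I$: choosing $\m\in I$ with $\deg(\m)\le d$ and $\BoX(\m)\notin\BoX(I)$ of minimal degree forces $\m\notin G(I)$, hence $\m/x_i\in I$ for some $i$, and the Borel move gives $\m/x_l=(x_i/x_l)\cdot(\m/x_i)\in I$. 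By minimality $\BoX(\m/x_l)\in\BoX(I)$, so $\BoX(\m)=x_{l,e}\cdot\BoX(\m/x_l)\in\BoX(I)$, a contradiction. This short descent replaces the entire case analysis you anticipate.
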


\begin{proof}
Since the inclusion ``$\subseteq$" is clear, it suffices to show the converse. 
For the contrary, assume that there is some $\m \in I$ 
with $\deg (\m) \leq d$ and $\BoX(\m) \not \in \BoX(I)$. 
Take $\m$ so that it has the smallest degree among these monomials. 
It is clear that $\m \not \in G(I)$. 
Hence there is some $i$ with $x_i | \m$ and $\m':=\m/x_i \in I$. 
Set $l:= \nu(\m)$. Since $I$ is Borel fixed, we have 
$\m'':=\m/x_l= (x_i/x_l)\cdot \m' \in I$.
Since $\deg (\m'') < \deg (\m)=:e$, we have $\BoX(\m'') \in \BoX(I)$. 
Hence $\BoX(\m)= x_{l, e} \cdot \BoX(\m'') \in \BoX(I)$. This is a contradiction.  
\end{proof}

As shown in \cite[Proposition~5.2]{HP}, the quotient $S/I$ of a Borel fixed ideal $I$ has 
a pretty clean filtration. The next result states that the same is true for $J:=\BoX(I)$. 
Moreover, since $J$ is a radical ideal, $\wS/J$ actually admits a clean filtration by 
\cite[Corollary~3.5]{HP}. 
Hence the simplicial complex associated with $J$ 
is non-pure shellable. 

\begin{thm}\label{pre main}
Let $I$ be a Borel fixed ideal, and set $J:=\BoX(I)$. Then $\wS/J$ 
has a  pretty clean filtration, in particular, $\wS/J$ is sequentially Cohen-Macaulay.  
\end{thm}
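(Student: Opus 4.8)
The plan is to construct an explicit pretty clean filtration of $\wS/J$ by induction, in the spirit of the proof that $S/I$ itself is pretty clean (\cite[Proposition~5.2]{HP}), with Lemma~\ref{gens of bpol} as the crucial new ingredient. First I would make two harmless reductions: no variable $x_{i,j}$ with $i>\nu(I)$ divides a generator of $J$, and a pretty clean filtration of a module $M$ over a polynomial ring yields one of $M\otimes_\kk\kk[\,\text{new variables}\,]$ (the primes are merely extended, which preserves the condition of Definition~\ref{pretty clean}); hence one may assume $\nu(I)=n$, and likewise $d=\max\{\deg\m\mid \m\in G(I)\}$. The cases $I\in\{0,S\}$ and $n\le 1$ are immediate, and the induction runs on the pair $(n,d)$ ordered lexicographically.

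For the inductive step I would use, with $\theta:=x_{1,1}$, the short exact sequence
\[
0\longrightarrow \wS/(J:\theta)\stackrel{\cdot\theta}{\longrightarrow}\wS/J\longrightarrow \wS/(J+(\theta))\longrightarrow 0 .
\]
Since $\theta$ divides $\BoX(\m)$ exactly when $x_1\mid\m$, and then $\BoX(\m)=x_{1,1}\cdot\psi(\BoX(\m/x_1))$ where $\psi(x_{i,j}):=x_{i,j+1}$ shifts the second index, I would identify the two outer terms. The ideal $J+(\theta)$ involves no variable $x_{1,j}$ and equals the box-polarization of the Borel fixed ideal $I_0\subset\kk[x_2,\dots,x_n]$ obtained by setting $x_1=0$ in $I$; hence $\wS/(J+(\theta))$ is, up to a polynomial factor, a box-polarization ring of a Borel fixed ideal in $n-1$ variables. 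For the colon, Lemma~\ref{gens of bpol} applied to $I$ and to $I:x_1$ (the latter Borel fixed, with all minimal generators of degree $\le d-1$ by the Borel property) gives $J:\theta=\psi(\BoX(I:x_1))\wS$, so $\wS/(J:\theta)$ is, up to a polynomial factor and a relabelling of variables, a box-polarization ring of a Borel fixed ideal whose maximal generator degree is $<d$. By the inductive hypothesis both outer modules are pretty clean.

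It remains to splice the two pretty clean filtrations into one of $\wS/J$ — the filtration of the submodule $\wS/(J:\theta)$ (twisted by $\cdot\theta$) first, that of $\wS/(J+(\theta))$ afterwards — and to check the condition of Definition~\ref{pretty clean}; the only point is that no associated prime of $\wS/(J:\theta)$ is contained in an associated prime of $\wS/(J+(\theta))$. Since $J$, $J:\theta$ and $J+(\theta)$ are squarefree, all rings in sight are reduced, so associated primes are minimal primes: those of $\wS/(J:\theta)$ avoid $\theta$ (because $J:\theta=J:\theta^2$) and lie in $\Ass(\wS/J)$, while those of $\wS/(J+(\theta))$ contain $\theta$. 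Thus a bad containment would be a strict inclusion of two minimal primes of $\wS/J$ --- impossible --- provided every minimal prime of $J+(\theta)$ is again a minimal prime of $J$, i.e.\ provided $\theta=x_{1,1}$ is a shedding vertex for the Stanley--Reisner complex of $J$. Granting this, the spliced filtration is pretty clean; since $J$ is radical it is in fact clean (\cite[Corollary~3.5]{HP}), and sequential Cohen--Macaulayness then follows from \cite[Theorem~4.1 and Corollary~3.4]{HP}.

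The two steps I expect to be genuinely delicate are the identification $J:x_{1,1}=\psi(\BoX(I:x_1))\wS$ and the shedding property of $x_{1,1}$. The first is where the failure of $\BoX$ to respect divisibility (Example~\ref{1st exm}) must be circumvented: ``$\supseteq$'' is Lemma~\ref{gens of bpol} for $I:x_1$, while for ``$\subseteq$'' one checks, using that $I$ is strongly stable, that every generator $\BoX(\m)$ of $J$ with $x_1\nmid\m$ already lies in $\psi(\BoX(I:x_1))\wS$ (indeed $\BoX(\m)=x_{\alpha_1,1}\,\psi(\BoX(\m/x_{\alpha_1}))$ with $\m/x_{\alpha_1}\in I:x_1$, where $x_{\alpha_1}$ is the smallest variable dividing $\m$). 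The second requires showing that a face of the complex which is maximal among $x_{1,1}$-free faces but to which $x_{1,1}$ may still be adjoined cannot occur --- a statement that should follow from strong stability of $I$ together with the fact that the second index in $\BoX(\m)=x_{\alpha_1,1}x_{\alpha_2,2}\cdots$ runs through $1,2,\dots,\deg\m$. Everything else --- the reductions, the lexicographic induction, and the final bookkeeping --- is routine.
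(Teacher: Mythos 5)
Your route is genuinely different from the paper's. The paper builds the filtration bottom--up: it takes the lex-largest $\m\in G(I)$ with $\nu(\m)=\nu(I)$, shows $I_1:=I+(\m/x_{\nu(I)})$ is again Borel fixed, computes the colon $\BoX(I):\BoX(\m/x_{\nu(I)})$ to be a prime generated by variables $x_{i,b_i+1}$, and iterates; pretty cleanness then comes from $\codim\fp_{j}\le\codim\fp_i$ for $j\ge i$. You instead run a deletion/link induction on the single variable $\theta=x_{1,1}$, which, if completed, proves the stronger statement that the Stanley--Reisner complex of $J$ is vertex decomposable. Most of your argument is sound. In particular $G(I:x_1)=\{\m/x_1\mid\m\in G(I),\ x_1\mid\m\}$ (any $\m\in G(I)$ with $x_1\nmid\m$ is non-minimal in $I:x_1$ because $\m/x_{\alpha_1}\in I:x_1$ by strong stability), so $I:x_1$ is Borel fixed with generators of degree $\le d-1$; the identity $J:\theta=\psi(\BoX(I:x_1))\wS$ then follows exactly as you indicate, with ``$\supseteq$'' from Lemma~\ref{gens of bpol} applied to $I$ (since $x_{1,1}\cdot\psi(\BoX(\n))=\BoX(x_1\n)$) and ``$\subseteq$'' from $\BoX(\m)=x_{\alpha_1,1}\cdot\psi(\BoX(\m/x_{\alpha_1}))$ together with Lemma~\ref{gens of bpol} applied to $I:x_1$. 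The identification of $J+(\theta)$ with (an extension of) $\BoX$ of the Borel fixed image of $I$ in $\kk[x_2,\dots,x_n]$ is also correct, as is your reduction of the cross-block pretty-clean condition to the statement that every minimal prime of $J+(\theta)$ is already a minimal prime of $J$.

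That last statement is the one genuine gap: you assert it ``should follow from strong stability'' but do not prove it, and it is the crux of the whole induction. Fortunately it closes in a few lines. Assume $0\ne I\ne S$. Since $I$ is Borel fixed and nonzero, $x_1^j\in G(I)$ for some $j\ge1$, so $x_{1,1}x_{1,2}\cdots x_{1,j}\in J$ and every minimal prime $P$ of $J$ contains some $x_{1,c}$ with $c\le j$; if moreover $x_{1,1}\notin P$ then $c\ge2$. Now observe that $x_{1,c}\mid\BoX(x^\ba)$ if and only if $c\le a_1$ (the second indices attached to $x_1$ in $\BoX$ start at $1$), so $x_{1,c}\mid\BoX(\m)$ implies $x_{1,1}\mid\BoX(\m)$. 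Hence the prime generated by $\bigl(P\setminus\{x_{1,c}\}\bigr)\cup\{x_{1,1}\}$ still contains every generator of $J$, contains $\theta$, and is properly contained in $P+(\theta)$; therefore $P+(\theta)$ is not a minimal prime of $J+(\theta)$, which is exactly the shedding property you need. With this inserted, your proof is complete; it trades the paper's explicit description of the filtration and of the associated primes (the form $(x_{i,c_i}\mid 1\le i\le m)$ with $c_1\le\cdots\le c_m$, which Theorem~\ref{main} and Section~4 rely on) for a shorter recursion yielding vertex decomposability.
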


\begin{proof}
Set $l:=\nu(I)$. Then $\{ \, \m \in G(I) \mid \nu(\m)=l \, \}$ is non-empty. 
Let $\m$ be the maximum element of this set with respect to the lexicographic order 
(not the {\it degree} lexicographic order). 
 If $\m=x_l$, then $I$ (resp. $J$) is a prime ideal 
$(x_1, \ldots, x_l)$ (resp. $(x_{1,1}, x_{2,1}\ldots, x_{l,1})$) and there is 
nothing to prove. So we may assume that $\m \ne x_l$, and set $\m_1:=\m/x_l$. 
Since $\m \in G(I)$, we have $\m_1\not \in I$. 

\bigskip

\noindent {\it Claim 1.} The ideal $I_1:= I + (\m_1)$ is Borel fixed.

\bigskip 

\noindent{\it Proof of Claim 1.} It suffices to show that $x_i | \m_1$ and $j < i$ imply 
$(x_j/x_i) \cdot \m_1 \in I$. 
Note that $\m':=x_l \cdot (x_j/x_i) \cdot \m_1 = (x_j/x_i) \cdot \m \in I$ and $\m'  > \m$ 
with respect to the lexicographic order. From our choice of $\m$, we have  $\m' \not \in G(I)$. 
Hence there is some $k$ such that $x_k | \m'$ and $\m'/x_k \in I$. 
If $k=l$, then we have $(x_j/x_i) \cdot \m_1 = \m'/x_k \in I$. 
So we may assume that $k \ne l$ and $\nu(\m'/x_k)=l$. Since $I$ is Borel fixed,  
we have $(x_j/x_i) \cdot \m_1 = \m'/x_l=(x_k/x_l) \cdot (\m'/x_k) \in I$. \qed  

\bigskip

If $\m_1= \prod_{i=1}^lx_i^{a_i}$, then  
$$\n:=\BoX(\m_1)=\prod_{\substack{1 \leq i \leq l \\b_{i-1}+1 \leq j \leq b_i }}x_{i,j},$$
where  $b_i:=\sum_{j=1}^ia_j$ for each $i\geq 0$ (here $b_0=0$). 
Note that $b_l=\deg (\m_1)=\deg (\n)$. 

\bigskip

\noindent{\it Claim 2.}  With the above notation, we have 
$J : \n = (x_{i, b_i+1} \mid 1 \leq i \leq l)$.  

\bigskip

\noindent{\it Proof of Claim 2.} First we prove that $x_{i, b_i+1} \cdot \n \in J$ 
for $1 \leq i \leq l$. Note that $x_i \cdot \m_1 = (x_i/x_l) \cdot \m \in I$. 
Since $\deg(x_i \cdot \m_1)=\deg (\m) \le d$, we have $\BoX(x_i \cdot \m_1) \in J$ by Lemma~\ref{gens of bpol}. 
If $\nu(\m_1)\leq i$, then we have $b_i=\deg (\n)$ and $x_{i, b_i+1} \cdot \n= \BoX(x_i \cdot \m_1) \in J$. 
Hence we may assume that $\nu(\m_1) >i$, and we can take  
$k:= \min \{ \, j \mid a_j >0, \, j >i \, \}$. 
Since $\m' := (x_i/x_k) \cdot \m_1$ is in $I$  by Claim~1, we have $\BoX(\m') \in J$  by Lemma~\ref{gens of bpol}. 
Hence $x_{i, b_i+1} \cdot \n = x_{k, b_i+1} \cdot \BoX(\m') \in J$.  

Next we prove $J : \n \subseteq (x_{i, b_i+1} \mid 1 \leq i \leq l)$. 
For the contrary, assume that there is a monomial $\n' \in \wS \setminus (x_{i, b_i+1} \mid 1 \leq i \leq l)$ satisfying $\n' \cdot \n \in J$. 
Then there is a monomial $\m'' =\prod x_i^{c_i} \in G(I)$ such that $\BoX(\m'')$ divides $\n' \cdot \n$.  
By the present assumption, we have that $\BoX(\m'')  \not \in (x_{i, b_i+1} \mid 1 \leq i \leq l)$. 
Under this assumption, we have the following. 

\bigskip 

\noindent{\it Claim 2.1.} Set $d_i :=\sum_{j=1}^i c_j$. Then $b_i \ge d_i$ for all $i$. 

\bigskip 

The above fact completes the proof of Claim 2. 
To see this, take the expression $\m_1 := \prod_{i=1}^e x_{\alpha_i}$ as \eqref{alpha expression}, 
where $e = \deg(\m_1)$. We have $e=b_l \ge d_l = \deg(\m'')=:f$.   
Moreover, since $I$ is Borel fixed and $b_i \ge d_i$ for all $i$, $\m'' \in I$ implies that 
$\prod_{i=1}^f x_{\alpha_i} \in I$. It follows that $\m_1 \in I$, which is a contradiction. 

\bigskip

\noindent{\it Proof of Claim 2.1.} 
Clearly, $b_0 = d_0=0$. 
Hence, if the claim does  not hold, there is some $i \ge 1$ such that $(b_i \ge ) \, b_{i-1} \ge d_{i-1}$ and $b_i < d_i$. 
Note that $x_{i,j}$ divides $\BoX(\m'')$ if and only if $d_{i-1}+1 \leq j \leq d_i$. 
Hence, under the present assumption,  $x_{i, b_i+1}$ divides $\BoX(\m'')$. 
This is a contradiction. \qed 

\bigskip

 \noindent{\it The continuation of the proof of Theorem~\ref{pre main}.} 
Set $J_1 := J +(\n)$ and $\fp:= (x_{i, b_i+1} \mid 1 \leq i \leq l)$. 
Then $J_1/J \cong (\wS/\fp)$ up to degree shift, and $\BoX(I_1)=J_1$.  
If $I_1$ is not a prime ideal, applying the above argument to $I_1$, 
we get a Borel fixed ideal $I_2 \, (\supset I_1)$ such that 
$\BoX(I_2)/J_1$ satisfies the similar property to 
$J_1/J$. Repeating this procedure, we have a sequence of Borel fixed ideals 
$$I=I_0 \subset I_1 \subset I_2 \subset \cdots \subset I_t$$
of $S$ such that $J_i:= \BoX(I_i)$ satisfies $J_i/J_{i-1} \cong \wS/\fp_i$ up to degree shift for all $i \geq 1$. 
Here $\fp_i \subset \wS$ is a prime ideal of the form 
$( \, x_{j, c_{i,j}} \mid 1 \leq j \leq l_i)$ for some 
$l_i, c_{i,j} \in \NN$. By the noetherian property of $S$,  
the procedure eventually terminates, that is, $I_t$ will become a prime ideal. 
In this case, $J_t=\BoX(I_t)$ is also a prime ideal, and we have a  prime filtration   
$$0 \subset  J_1/J \subset J_2/J \subset \cdots \subset J_t/J \subset \wS/J.$$
This is a pretty clean filtration. In fact, 
$\nu (I_1) \le \nu (I)$ by the construction. Similarly, 
$\nu (I_j) \le \nu (I_i)$ holds for all $i,j$ with $j \geq i$. On the other hand, we have 
$\codim \fp_i = l_i = \nu(I_i)$. Hence  
$\codim \fp_j \le \codim \fp_i$  for all $j \geq i$.  
Now recall the remark after Definition~\ref{pretty clean}.  
\end{proof}

\begin{rem}
By the above proof, we see that any associated prime of $J$ is of the form  
$( \, x_{i, c_i} \mid 1 \leq i \leq m)$ for some $m, c_i \in \NN$ with 
$c_1 \le c_2 \le \cdots \le c_m$.
\end{rem}
\begin{thm}\label{main}
If $I \subset S$ is a Borel fixed ideal, then $J:=\BoX(I)$ gives a polarization of $I$, 
which is faithful.     
\end{thm}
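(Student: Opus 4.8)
The plan is to combine Theorem~\ref{pre main} with Lemma~\ref{seqCM2} to get both conclusions at once. By Theorem~\ref{pre main}, $\wS/J$ is sequentially Cohen--Macaulay. We already know (as remarked after Definition~\ref{pol def}) that condition (i) of Definition~\ref{pol def} is automatically satisfied for $J = \BoX(I)$, since $\phi(\BoX(\m)) = \m$ for every monomial $\m$, hence $\phi(J) = I$. So the only thing standing between us and the statement is condition (ii): that $\Theta = \{x_{i,1} - x_{i,j}\}$ forms a $\wS/J$-regular sequence. Once (ii) is established, $J$ is a genuine polarization of $I$, and then Lemma~\ref{seqCM2} immediately upgrades this to ``$S/I$ is sequentially Cohen--Macaulay and $J$ is faithful.''

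So the heart of the matter is verifying condition (ii), and here I see two routes. The first is to invoke Lemma~\ref{NR}: condition (ii) is equivalent to $\beta_{i,j}^{\wS}(J) = \beta_{i,j}^S(I)$ for all $i,j$. The second, which I find cleaner, is to argue directly that $\Theta$ is a regular sequence on $\wS/J$ using the sequential Cohen--Macaulayness. The key observation is the following. Because $\wS/J$ is sequentially Cohen--Macaulay and $J$ is a squarefree (hence radical) monomial ideal, all associated primes of $\wS/J$ are monomial primes generated by subsets of the variables $\{x_{i,j}\}$; moreover, by the Remark after Theorem~\ref{pre main}, each associated prime has the special form $(x_{i,c_i} \mid 1 \le i \le m)$ with $c_1 \le \cdots \le c_m$. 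The crucial point is that \emph{no} such associated prime contains any element $x_{i,1} - x_{i,j}$ up to the relevant change of coordinates — more precisely, I want to check that the first element of $\Theta$, say $\theta := x_{1,1} - x_{1,2}$ (or a suitable reindexing), is a non-zero-divisor on $\wS/J$, i.e. is not contained in any associated prime. Since the associated primes are the $\fp_i$ and each is a prime generated by variables with distinct first indices among $\{1,\dots,l_i\}$ appearing at most once, a linear form $x_{i,1} - x_{i,j}$ with $j \ge 2$ lies in $\fp_i$ only if both $x_{i,1}$ and $x_{i,j}$ are among the generators of $\fp_i$ — but that is impossible because each $\fp_i$ contains at most one variable with first index $i$. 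Hence $\theta$ is a non-zero-divisor on $\wS/J$.

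Now Lemma~\ref{seqCM} does the bookkeeping for the induction. Applying it with $M = \wS/J$ (over the ring $\wS$, which plays the role of $S$ in that lemma) and $y = \theta \in \wS_1$: since all associated primes of $\wS/J$ are generated by variables (elements of $\wS_1$), the lemma tells us $\wS/(J + (\theta))$ is again sequentially Cohen--Macaulay, and its associated primes are $\{\fp' + (\theta) \mid \fp' \in \Ass(\wS/J)\}$ — each of which is again a monomial-type prime (after using $\theta$ to eliminate a variable) generated by elements of degree one, still with at most one variable of each first index. Therefore the next element of $\Theta$ is again a non-zero-divisor, and we may iterate. After quotienting by all of $\Theta$ we arrive at $\wS/(J+(\Theta)) \cong S/I$, having verified at each stage that the element we killed was a non-zero-divisor; this is precisely the statement that $\Theta$ is a $\wS/J$-regular sequence, so (ii) holds and $J$ is a polarization. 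Finally, sequential Cohen--Macaulayness of $\wS/J$ plus Lemma~\ref{seqCM2} gives faithfulness, completing the proof.

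The step I expect to be the main obstacle is pinning down exactly why each element of $\Theta$ stays a non-zero-divisor after the previous ones have been killed — that is, checking that the ``eliminate a variable'' step preserves the structural property (associated primes generated by degree-one elements, at most one per first index) that makes the inductive argument go through. One must be a little careful about the order in which the elements of $\Theta$ are processed and about what the associated primes of the successive quotients look like; the cleanest bookkeeping may be to fix $i$ and kill $x_{i,1} - x_{i,2}, x_{i,1} - x_{i,3}, \dots$ in turn, observing that after killing $x_{i,1} - x_{i,j}$ the prime $\fp'$ either is unaffected (if $x_{i,1} \notin \fp'$) or has $x_{i,1}$ replaced by the identification — and in the latter case $x_{i,j}$ was not in $\fp'$, so the resulting prime still has at most one variable of first index $i$. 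Alternatively, one can sidestep the delicate induction entirely by computing Hilbert series: since $\wS/(J+(\Theta)) \cong S/I$ always and $\Theta$ consists of $\sum_i (d-1)$ linear forms, condition (ii) is equivalent to the Hilbert series identity $H_{\wS/J}(t) = H_{S/I}(t) / (1-t)^{\#\Theta}$, which can in principle be read off from the pretty clean filtration of Theorem~\ref{pre main} (the factors $\wS/\fp_i$ of the filtration match, after applying $\phi$, a prime filtration of $S/I$ obtained from the parallel construction on the $I_i$). Either way the conceptual content is the same; I would present the non-zero-divisor argument as the main line since it is the most transparent.
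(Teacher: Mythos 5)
Your proposal is correct and follows essentially the same route as the paper: Theorem~\ref{pre main} gives sequential Cohen--Macaulayness, the Remark after it gives that every associated prime has the form $(x_{k,c_k}\mid 1\le k\le m)$ with at most one variable per first index, Lemma~\ref{seqCM} propagates this through the successive quotients so each element of $\Theta$ stays a non-zero-divisor, and Lemma~\ref{seqCM2} yields faithfulness. The inductive bookkeeping you flag as the delicate step is exactly the (rather terse) core of the paper's argument, and your handling of it is sound.
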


\begin{proof} To see that $J$ is a polarization, it suffices to show that $\Theta$ forms a $\wS/J$-regular sequence. 
So, assuming that a subset $\Theta'$ of $\Theta$ forms a $\wS/J$-regular sequence, we show that   
$\Theta' \cup \{ \, x_{i,1}-x_{i,j} \,\}$ is also a $\wS/J$-regular sequence for  $x_{i,1}-x_{i,j} \in \Theta \setminus \Theta'$. 
Since $\wS/J$ is sequentially Cohen-Macaulay and $\Theta'$ is assumed to be a regular sequence, 
$\wS/(J+(\Theta'))$  is also sequentially Cohen-Macaulay and 
$$\Ass_S(\wS/(J +(\Theta'))) = \{ \, \fp + (\Theta') \mid \fp \in \Ass (\wS/J) \, \}$$ 
by the repeated use of Lemma~\ref{seqCM}.  Since all $\fp \in \Ass(\wS/J)$ is of the form 
$(x_{k, c_k} \mid 1 \leq k \leq m)$, $x_{i,1}-x_{i,j}$ is $\wS/(J +(\Theta'))$-regular. 

The faithful-ness follows from Lemma~\ref{seqCM2}.  
\end{proof}

S. Murai told us that Theorem~\ref{main} can be shown by using his \cite[Proposition~1.9]{M}. 
We will explain this idea in Remark~\ref{Murai} below, since it requires  {\it (generalized) 
squarefree operations} introduced in the next section. 
%(For general $\{a_i\}_{i \in \NN}$, $\alpha^a$ has no relation to polarization. 
%Our choice of $\{a_i\}_{i \in \NN}$ makes this operator ``polarization-like".) 

However, this second proof does not give a pretty clean filtration of $\wS/\BoX(I)$ 
(equivalently, the non-pure shellability of the associated simplicial complex) and the following generalization of Theorem~\ref{main}. 
Moreover, in the next section, we will show a new proof of \cite[Proposition~1.9]{M} using $\BoX(I)$, and 
gives a new perspective to the squarefree operations.

\begin{thm}\label{mixed}
Let $A$ be a subset of  $\{1,2, \ldots, n\}$. 
For a monomial $\m=x^\ba \in S$, set $\m_A:= \prod_{i \in A} x_i^{a_i}$, 
$\m_{-A}:=\prod_{i \not \in A} x_i^{a_i}$ and 
$$\BoX_A(\m):= \BoX(\m_A) \cdot \pol(\m_{-A}) \in \wS$$
(we set $\wS:=\kk[\, x_{i,j} \mid 1 \leq i \leq n, 1 \leq j \leq d \,],$  
where $d:=\max \{ \, \deg (\m) \mid \m \in G(I) \, \}$).   
If $I$ is Borel fixed, then $\wS/\BoX_A(I)$ has a pretty clean filtration, where  
$$\BoX_A(I):=( \, \BoX_A(\m) \mid \m \in G(I) \, ).$$ 
Moreover, $\BoX_A(I)$ gives a faithful polarization of $I$. 
\end{thm}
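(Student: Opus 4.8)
\textbf{Proof proposal for Theorem~\ref{mixed}.}
The plan is to run the same induction as in the proof of Theorem~\ref{pre main}, but carry the extra bookkeeping needed to track the ``polarized'' coordinates $i\notin A$ alongside the ``box'' coordinates $i\in A$. First I would record the analogue of Lemma~\ref{gens of bpol}: if $I$ is Borel fixed, then $\BoX_A(I)=(\BoX_A(\m)\mid \m\in I,\ \deg(\m)\le d)$. This follows by the same minimal-degree argument, once one checks that for $\m\in I$ with $l:=\nu(\m)$ one has $\BoX_A(\m)=y\cdot\BoX_A(\m/x_l)$, where $y=x_{l,e}$ if $l\in A$ (with $e=\deg\m$) and $y=x_{l,a_l}$ if $l\notin A$; in either case $y$ is a variable not dividing $\BoX_A(\m/x_l)$, so the inductive step goes through verbatim.

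Next I would redo the key colon computation. Keeping the notation of the proof of Theorem~\ref{pre main} (so $\m$ is the lex-largest generator with $\nu(\m)=l=\nu(I)$, and $\m_1:=\m/x_l\notin I$), I claim $\BoX_A(I):\BoX_A(\m_1)$ is a prime ideal of the form $(x_{i,c_i}\mid i\in B)$ for a suitable $B\subseteq\{1,\dots,l\}$ and suitable exponents $c_i$ — namely $c_i=b_i+1$ for $i\in A\cap B$ (the box part, exactly as in Claim~2) and $c_i=a_i+1$ for $i\in B\setminus A$ (the standard-polarization part, which contributes the usual colon $\pol(\cdot):\pol(\cdot)$). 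The inclusion ``$\supseteq$'' uses Claim~1 (the ideal $I+(\m_1)$ is still Borel fixed, which is purely a statement about $I$ and is unchanged) together with $x_i\m_1\in I$ for $i\le l$, applying $\BoX_A$ and Lemma~\ref{gens of bpol}${}_A$. For ``$\subseteq$'' I would reprove Claim~2.1 in the hybrid form: if $\BoX_A(\m'')$ divides $\n'\cdot\BoX_A(\m_1)$ with $\m''\in G(I)$ avoiding the proposed prime, then comparing exponents coordinate-by-coordinate forces $\m_1\in I$ (using Borel-fixedness in the box coordinates and a plain divisibility comparison in the polarized coordinates), a contradiction.

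With the colon ideal identified, the rest is formal and identical to Theorem~\ref{pre main}: set $J_1:=\BoX_A(I)+(\BoX_A(\m_1))$, observe $J_1/J\cong\wS/\fp_1$ up to shift and $J_1=\BoX_A(I_1)$ for $I_1:=I+(\m_1)$, and iterate. Since $\nu(I_1)\le\nu(I)$ and, more generally, $\nu(I_j)\le\nu(I_i)$ for $j\ge i$, while $\codim\fp_i\le\nu(I_i)$ (the prime $\fp_i$ lives in coordinates $\le l_i:=\nu(I_i)$, whether those coordinates are in $A$ or not), the codimensions are nonincreasing along the filtration, so by the remark after Definition~\ref{pretty clean} the resulting prime filtration of $\wS/\BoX_A(I)$ is pretty clean; in particular $\wS/\BoX_A(I)$ is sequentially Cohen-Macaulay. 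Finally, condition~(i) of Definition~\ref{pol def} is immediate since $\phi(\BoX_A(\m))=\m$, and then Lemma~\ref{seqCM2} gives at once that $\BoX_A(I)$ is a faithful polarization of $I$.

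The main obstacle is the hybrid colon computation — specifically, getting the exponents $c_i$ right in the mixed prime $(x_{i,c_i}\mid i\in B)$ and proving the ``$\subseteq$'' inclusion. In the pure box case the shift by $1$ in $x_{i,b_i+1}$ interacts with the partial sums $b_i$; in the pure polarization case one colons off at $x_{i,a_i+1}$; the delicate point is that these two indexing conventions must be glued consistently on a single monomial $\m_1$ whose box part and polarized part are interleaved, and one must verify that an offending $\m''$ really does force $\m_1\in I$. Everything else — Claim~1, the termination of the procedure, and the codimension monotonicity — transfers from Theorem~\ref{pre main} with only cosmetic changes.
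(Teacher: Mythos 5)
Your overall strategy is exactly the paper's: generalize Lemma~\ref{gens of bpol} to $\BoX_A$, redo Claims~1 and~2 with a hybrid colon ideal, build the prime filtration, and check pretty-cleanness via the codimension remark after Definition~\ref{pretty clean}. The skeleton is sound, and your identification of the colon ideal (box exponents $b_i+1$ on the $A$-coordinates, polarization exponents $a_i+1$ elsewhere, one variable for each $i\le l$) agrees with the paper's $\fp=(x_{i,b_i+1}\mid i\in A,\ i\le l)+(x_{i,a_i+1}\mid i\notin A,\ i\le l)$ --- provided you take $b_i=\sum_{j\in A,\ j\le i}a_j$, the partial sum over $A$ only, not the full partial sum ``exactly as in Claim~2''. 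Likewise the factorization in your first paragraph should read $y=x_{l,\deg(\m_A)}$ when $l\in A$, not $x_{l,\deg(\m)}$: the second index of the top box variable is the degree of the $A$-part alone. These are bookkeeping slips of precisely the kind you flag as the delicate point, and they are fixable.

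The one genuine logical gap is your final sentence. Lemma~\ref{seqCM2} has as a \emph{hypothesis} that $J$ is already a polarization of $I$; it upgrades a polarization to a faithful one when $\wS/J$ is sequentially Cohen--Macaulay, but it does not establish condition~(ii) of Definition~\ref{pol def}. Having checked condition~(i) and built the pretty clean filtration, you must still prove that $\Theta$ is a $\wS/\BoX_A(I)$-regular sequence. This is done as in the proof of Theorem~\ref{main}: read off from your filtration that every associated prime of $\wS/\BoX_A(I)$ has the form $(x_{i,c_i}\mid i\in B)$, containing at most one variable from each row $\{x_{i,1},\dots,x_{i,d}\}$; then iterate Lemma~\ref{seqCM}, which preserves sequential Cohen--Macaulayness and controls the associated primes after adjoining part of $\Theta$, so that each $x_{i,1}-x_{i,j}$ avoids all of them. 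Only after this does Lemma~\ref{seqCM2} apply and yield faithfulness. (A related small point: you write $\codim\fp_i\le\nu(I_i)$, but the monotonicity argument needs the equality $\codim\fp_i=\nu(I_i)=l_i$, which holds because $\fp_i$ has exactly one generator for each coordinate $i\le l_i$.)
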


By the above theorem, we see that Borel fixed ideals have many alternative polarizations.  

\begin{lem}
In the situation of Theorem~\ref{mixed}, we have  
$$\BoX_A(I)=( \, \BoX_A(\m) \mid \text{$\m \in I$ with $\deg(\m) \leq d$}\, ).$$
\end{lem}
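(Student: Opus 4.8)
The plan is to copy the proof of Lemma~\ref{gens of bpol} almost verbatim, inserting one short case distinction according to whether the ``pivot index'' belongs to $A$. The inclusion $\BoX_A(I)\subseteq(\,\BoX_A(\m)\mid \m\in I,\ \deg(\m)\le d\,)$ is clear, since every element of $G(I)$ has degree at most $d$. For the reverse inclusion I would argue by contradiction: assume that some $\m\in I$ with $\deg(\m)\le d$ satisfies $\BoX_A(\m)\notin\BoX_A(I)$, and choose such an $\m$ of smallest possible degree. As in the proof of Lemma~\ref{gens of bpol}, $\m\notin G(I)$, so there is an $i$ with $x_i\mid\m$ and $\m/x_i\in I$; putting $l:=\nu(\m)$ and using that $I$ is Borel fixed, one gets $\m'':=\m/x_l\in I$. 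Since $\deg(\m'')<\deg(\m)\le d$, minimality of $\m$ forces $\BoX_A(\m'')\in\BoX_A(I)$, so it is enough to prove that $\BoX_A(\m'')$ divides $\BoX_A(\m)$, as this yields $\BoX_A(\m)\in\BoX_A(I)$, a contradiction.

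To see the divisibility I would distinguish two cases. If $l\in A$, then $\m_{-A}=(\m'')_{-A}$ while $\m_A=(\m'')_A\cdot x_l$ with $\nu(\m_A)=l$ (because $x_l\mid\m_A$ and $\m_A\mid\m$); the computation already made in the proof of Lemma~\ref{gens of bpol} (that $\BoX$ of a monomial equals $x_{l,\deg}$ times $\BoX$ of its quotient by $x_l$, whenever $l$ is the largest index dividing it) then gives $\BoX(\m_A)=x_{l,\deg(\m_A)}\cdot\BoX((\m'')_A)$, hence $\BoX_A(\m)=x_{l,\deg(\m_A)}\cdot\BoX_A(\m'')$. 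If $l\notin A$, then $\m_A=(\m'')_A$ while $\m_{-A}=(\m'')_{-A}\cdot x_l$; since $\pol(x^\ba\cdot x_k)=x_{k,\,a_k+1}\cdot\pol(x^\ba)$ for every monomial $x^\ba$ and every $k$, we obtain $\pol(\m_{-A})=x_{l,\,\deg_l(\m)}\cdot\pol((\m'')_{-A})$ and therefore $\BoX_A(\m)=x_{l,\,\deg_l(\m)}\cdot\BoX_A(\m'')$. In either case the appended variable $x_{l,c}$ has column index $c\le\deg(\m)\le d$, so it is an honest variable of $\wS$, which is all that is needed.

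I do not foresee a genuine difficulty: the argument is exactly that of Lemma~\ref{gens of bpol}, together with the observation that multiplying by the variable $x_l$ with $l=\nu(\m)$ interacts with both $\BoX$ and $\pol$ in the same elementary way — it appends a single new variable with first index $l$ and changes nothing else — which is precisely why the pivot is taken to be $\nu(\m)$. The only point requiring a moment's care is checking that the column index of this new variable stays in the admissible range $\{1,\dots,d\}$, which is guaranteed by the standing hypothesis $\deg(\m)\le d$ (equivalently $d=\max\{\deg(\m)\mid\m\in G(I)\}$ as in Theorem~\ref{mixed}).
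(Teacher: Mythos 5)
Your argument is correct and takes essentially the same route as the paper: a minimal-degree counterexample, the Borel-fixed move to a largest index, and the observation that both $\BoX$ and $\pol$ peel off a single variable under division. The only (harmless) difference is the choice of pivot --- the paper divides by $x_i$ directly when $i\notin A$ and by $x_{\nu_A(\m)}$ when $i\in A$, whereas you always divide by $x_{\nu(\m)}$ and case-split on whether $\nu(\m)\in A$; both versions of the identity $\BoX_A(\m)=x_{l,c}\cdot\BoX_A(\m'')$ check out.
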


Clearly, this is a generalization of Lemma~\ref{gens of bpol}. 

\begin{proof}
It suffices to prove  ``$\supseteq$". Set $J:= \BoX_A(I)$. 
For the contrary, assume that there is some $\m=x^\ba \in I$ 
with $\deg (\m) \leq d$ and $\BoX_A(\m) \not \in J$. Since $\m \not \in G(I)$,  
there is some $i$ with $x_i | \m$ and $\m':=\m/x_i \in I$. 
If $i \not \in A$, then it is easy to see that $\BoX_A(\m)=x_{i, a_i}  \cdot \BoX_A(\m') \in J$. 
Hence we have $i \in A$.  If we replace $\nu(\m)$ by $\nu_A(\m):= \max \{ \, i \in A \mid a_i > 0 \, \}$, 
the last part of the proof of Lemma~\ref{gens of bpol} works verbatim, 
except that $\BoX_A(\m)= x_{\nu_A(\m), f} \cdot \BoX_A(\m'')$ with $f:=\sum_{i \in A}a_i$. 
\end{proof}

\noindent{\it Proof of Theorem~\ref{mixed}.}
For the former assertion, we imitate the proof of Theorem~\ref{pre main}. 
First,  take the same $\m \in \wS$ as in the proof of Theorem~\ref{pre main}   
(here $\nu(\m)= \nu (I)=:l$, and $\nu_A(I)$ is {\it not} used). 
As shown in Claim 1 of the original proof, $I+(\m_1)$ is Borel fixed. 

For the statement corresponding to Claim~2, we need modification. 
If $\m \ne x_l$, set $\m_1:= \m/x_l= \prod_{i=1}^n x^{a_i}$ and $\n=\BoX_A(\m_1)$. 
For each $i \in A$, set 
$$b_i := \sum_{j \in A, \, j \le i} a_j.$$ 
Next, we will show that $J : \n = \fp$, where 
$$\fp :=(x_{i, b_i+1} \mid i \in A, i \leq l)+(x_{i,a_i+1} \mid i \not \in A, i \le l).$$  

Note that $x_i \cdot \m_1 =(x_i/x_l) \cdot \m \in I$ for $i \le l$. 
If $i \not \in A$, then we have $x_{i, a_i+1} \cdot \n =\BoX_A(x_i \cdot \m_1) \in J$. 
If $i \in A$, then we can show that $x_{i, b_i+1} \cdot \n \in J$ 
by a similar argument to the proof of Claim~2, while  
we have to replace $\min \{ \, j \mid a_j >0, \, j >i \, \}$ by  
$\min \{ \, j \in A \mid a_j >0, \, j >i \, \}$.  Hence we have $J:\n \supset \fp$. 

To prove the converse, assume that a monomial $\n' \in \wS$ satisfies $\n' \cdot \n \in J$. 
Then there is a monomial $\m'' \in G(I)$ such that 
$\BoX_A(\m'')$ divides $\n' \cdot \n$.  
If $\n' \not \in (x_{i, a_i+1} \mid i \not \in A, \,  i \leq l)$, then 
$\BoX_A(\m'')  \not \in (x_{i, a_i+1} \mid i \not \in A, i \leq l)$ also. 
It means that $\deg_i (\m'') \leq a_i= \deg_i (\m_1)$ for all $i \not \in A$. 
Now, concentrating our attention to the variables $x_i$ with $i \in A$ and $i \le l$, 
we can use the proof of Claim~2 (almost) verbatim, and we see that the assumption $\n' \not \in \fp$ 
implies that $\m_1 \in I$. This is a contradiction. 

Hence we have $J : \n = \fp$, and a pretty clean filtration can be constructed as in 
(the final step of) the proof of Theorem~\ref{pre main}. 

The above argument shows that any associated prime of $\wS/\BoX(I)$ is of the form 
$( \, x_{i, c_i} \mid 1 \leq i \leq m)$ (but we lost the relation $c_1 \le c_2 \le \cdots \le c_m$ here).  
Hence, by a similar argument to the proof of Theorem~\ref{main}, 
we can show that $J$ is a faithful polarization. 
%(since the argument is almost same, we omit this part of the proof). 
\qed

\section{Application to squarefree operation}
Throughout this section, let $\{a_i\}_{i \in \NN}$ be a non-decreasing sequence of non-negative integers. 
We also assume that $a_0=0$ for the convenience.   

Let $T=\kk[x_1, \ldots, x_N]$ be a polynomial ring with $N \gg 0$. 
For a monomial $\m \in S=\kk[x_1, \ldots, x_n]$, take the expression 
$\m= \prod_{i=1}^e x_{\alpha_i}$ as \eqref{alpha expression}.  
Murai (\cite{M}) defined the operation $(-)^{\sigma(a)}$ which sends  $\m$ to   
$$\m^{\sigma(a)}:= \prod_{i=1}^e x_{\alpha_i +a_{i-1}} \in T.$$
For a monomial ideal $I \subset S$, he also set 
$$I^{\sigma(a)}:= ( \, \m^{\sigma(a)} \mid \m \in G(I) \, ) \subset T.$$
(In \cite{M}, the symbol ``$\alpha^a$" is used for this operation. 
However, we change the notation, since the letter $\alpha$ has been used already.)

If $a_{i+1} > a_i$ for all $i$, then $\m^{\sigma(a)}$ is a squarefree monomial. 
In particular, if $a_i=i$ for all $i$, then $(-)^{\sigma(a)}$ coincides  
with the {\it squarefree operation} $(-)^{\sigma}$, which plays an important role in the 
construction of the  {\it symmetric shifting} of a simplicial complex 
(see \cite{AHH2}, and also \cite{K} for the original form of the shifting theory).

Let $L_a$ be the linear subspace of $S_1$ spanned by 
$$X_a := \{ \, x_{i,j} -x_{i', j'} \mid i+a_{j-1}=i'+a_{j'-1}  \, \},$$
and take a subset $\Theta_a \subset X_a$ so that it forms a basis of $L_a$. 
For example, if $a_i=i$ for all $i$, then we can take 
$$\{ \, x_{i,j} -x_{i+1, j-1} \mid 1 \le i <n,   1< j \le d \, \}$$
as $\Theta_a$.   
Clearly,  $\Theta_a$ is a $\wS$-regular sequence, and the ring homomorphism 
$\psi : \wS \to T \, (=\kk[x_1, \ldots, x_N])$ 
defined by $\wS \ni x_{i,j} \mapsto x_{i+a_{j-1}} \in T$ induces the isomorphism 
$\wS/(\Theta_a) \cong T$ (if we adjust the number $N$).   

\begin{prop}\label{Kalai-Murai}
Let $I \subset S$ be a Borel fixed ideal, and set $J:=\BoX(I)$. 
Then $\Theta_a$ forms a $\wS/J$-regular sequence, and we have 
$\wS/(\Theta_a)\otimes_{\wS} \wS/J  \cong T/I^{\sigma(a)}$ through the isomorphism $S/(\Theta_a) \to T$ 
(that is,  we have $\psi(J)=I^{\sigma(a)}$).  
\end{prop}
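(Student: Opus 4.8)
The plan is to reduce the claim to two things: first, the identity $\psi(J) = I^{\sigma(a)}$ at the level of ideals, which is essentially combinatorial bookkeeping; and second, the statement that $\Theta_a$ is a $\wS/J$-regular sequence, which I would deduce from sequential Cohen--Macaulayness of $\wS/J$ (Theorem~\ref{pre main}) together with the explicit description of $\Ass(\wS/J)$ recorded in the remark after Theorem~\ref{pre main}. Once $\Theta_a$ is known to be regular on $\wS/J$, condition~(i) of Definition~\ref{pol def} for the pair $(\Theta_a, J)$ gives the isomorphism $\wS/(\Theta_a)\otimes_{\wS}\wS/J \cong T/\psi(J) = T/I^{\sigma(a)}$ directly, since $\psi$ is exactly the map sending $x_{i,j}$ to $x_{i+a_{j-1}}$ and hence kills $\Theta_a$.

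First I would verify $\psi(\BoX(\m)) = \m^{\sigma(a)}$ for a monomial $\m = \prod_{p=1}^e x_{\alpha_p}$ with $e \le d$. By definition $\BoX(\m) = \prod_{p=1}^e x_{\alpha_p, p}$, so $\psi(\BoX(\m)) = \prod_{p=1}^e x_{\alpha_p + a_{p-1}} = \m^{\sigma(a)}$; applying this to the minimal generators of $I$ and using Lemma~\ref{gens of bpol} (so that $J$ is generated by $\BoX(\m)$ for \emph{all} $\m\in I$ of degree $\le d$, which is convenient when chasing containments) gives $\psi(J) = I^{\sigma(a)}$. One small point to check here is that $\psi$ is well defined on the relevant degree range, i.e.\ that the target variables $x_{i+a_{j-1}}$ all lie within $x_1,\dots,x_N$ — this is the ``adjust $N$'' clause, and is immediate since indices occurring are bounded by $n + a_{d-1}$.

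Next, the heart of the matter: $\Theta_a$ is $\wS/J$-regular. I would argue exactly as in the proof of Theorem~\ref{main}. Enumerate $\Theta_a = \{\theta_1,\dots,\theta_r\}$ and inductively assume $\Theta' = \{\theta_1,\dots,\theta_{s}\}$ is $\wS/J$-regular; then $\wS/(J+(\Theta'))$ is sequentially Cohen--Macaulay by the repeated application of Lemma~\ref{seqCM}, and
\[
\Ass\bigl(\wS/(J+(\Theta'))\bigr) = \{\,\fp + (\Theta') \mid \fp \in \Ass(\wS/J)\,\}.
\]
By the remark after Theorem~\ref{pre main}, every $\fp \in \Ass(\wS/J)$ has the form $(x_{k,c_k} \mid 1 \le k \le m)$ for suitable $m$ and $c_1 \le \dots \le c_m$. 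To conclude that $\theta_{s+1} = x_{i,j} - x_{i',j'}$ (with $i + a_{j-1} = i' + a_{j'-1}$) is a nonzerodivisor on $\wS/(J+(\Theta'))$, it suffices that $\theta_{s+1}$ lies in no associated prime, i.e.\ that $\theta_{s+1} \notin \fp + (\Theta')$ for each $\fp$ as above. Each such $\fp + (\Theta')$ is generated by a collection of single variables $x_{k,c_k}$ together with the binomials $\theta_1,\dots,\theta_s$; a binomial difference of variables $x_{i,j}-x_{i',j'}$ with $x_{i,j}\ne x_{i',j'}$ cannot lie in such an ideal (this is the same genericity observation used in Theorem~\ref{main}: working modulo the monomial part, $\theta_{s+1}$ stays a nonzero element in a polynomial ring modulo a linear ideal that does not already identify $x_{i,j}$ with $x_{i',j'}$, because the $\theta$'s among $\Theta'$ chosen form a \emph{basis} of (a subspace of) $L_a$ and $\theta_{s+1}$ is not in their span).

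The main obstacle I anticipate is precisely this last linear-algebra point — checking that adding $\theta_{s+1}$ does not produce a zerodivisor despite the associated primes now carrying binomial relations. The clean way to handle it is to observe that $\Theta_a$ is, by construction, a basis of $L_a \subseteq S_1$, so the images of the variables $x_{i,j}$ in $\wS/(\Theta_a)$ are literally the variables of $T$ via $\psi$; hence $\wS/(J + (\Theta_a)) \cong T/I^{\sigma(a)}$ has Krull dimension equal to $\dim \wS - \#\Theta_a$ (one computes $\dim T/I^{\sigma(a)} = \dim S/I$ using that $\sigma(a)$ preserves the combinatorial type enough to preserve dimension, or more simply $\dim T/I^{\sigma(a)} = n - \height I$ since $\psi$ sends a minimal prime of $\wS/J$ of the stated form to a prime of the same height in $T$), which combined with sequential Cohen--Macaulayness of $\wS/J$ forces $\Theta_a$ to be a regular sequence by the standard dimension count. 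Either route works; I would present the associated-prime version since it parallels Theorem~\ref{main} and avoids a separate dimension computation. Finally, $\psi(J) = I^{\sigma(a)}$ from the first step upgrades condition~(i) into the stated isomorphism $\wS/(\Theta_a)\otimes_{\wS}\wS/J \cong T/I^{\sigma(a)}$, completing the proof.
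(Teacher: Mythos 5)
Your overall strategy coincides with the paper's (reduce everything to the description of $\Ass(\wS/J)$ from the remark after Theorem~\ref{pre main}, and induct using Lemma~\ref{seqCM}), and the bookkeeping $\psi(\BoX(\m))=\m^{\sigma(a)}$, hence $\psi(J)=I^{\sigma(a)}$, is fine. But there is a genuine gap at the one nontrivial step. You assert that a binomial $\theta_{s+1}=x_{i,j}-x_{i',j'}$ ``cannot lie'' in an ideal of the form $\fp+(\Theta')$ with $\fp=(x_{k,c_k}\mid 1\le k\le m)$, and you justify this only by observing that $\theta_{s+1}$ is outside the span of $\Theta'$. That is not enough: if $\fp$ contained \emph{both} variables $x_{i,j}$ and $x_{i',j'}$ (i.e.\ $j=c_i$ and $j'=c_{i'}$), then $x_{i,j}-x_{i',j'}\in\fp$ outright and $\theta_{s+1}$ would be a zerodivisor. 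Your ``working modulo the monomial part'' reduction silently assumes this cannot happen, but ruling it out is exactly where the hypotheses enter, and it is the entire content of the paper's argument: for $x_{i,j}-x_{i',j'}\in\Theta_a$ with $i<i'$ one has $a_{j-1}-a_{j'-1}=i'-i>0$, hence $j>j'$ since $(a_k)$ is non-decreasing; on the other hand the associated primes satisfy $c_1\le c_2\le\cdots\le c_m$, so if both variables lay in $\fp$ we would get $j=c_i\le c_{i'}=j'$, a contradiction.

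Your proposal nowhere invokes either the monotonicity of $(a_k)$ or the ordering $c_1\le\cdots\le c_m$, and without them the claim you lean on is simply false. (The same monotonicity shows $k\mapsto k+a_{c_k-1}$ is injective on $\{1,\dots,m\}$, whence $L_a\cap\langle x_{k,c_k}\mid 1\le k\le m\rangle=0$ and the images of $\Theta_a$ in $\wS/\fp$ remain linearly independent, which is what the induction really requires.) The alternative dimension-count route you sketch has the identical gap: the assertion that $\psi$ sends a minimal prime of the stated form to a prime of the same height in $T$ is again precisely the injectivity of $k\mapsto k+a_{c_k-1}$, asserted but not proved.
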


\begin{proof}
The latter assertion is clear by the expression \eqref{b-pol},  
and it suffices to prove the former. 
Recall that $\wS/J$ is sequentially Cohen-Macaulay, 
and any associated prime of $\wS/J$ is 
of the form  $( \, x_{i, c_i} \mid 1 \leq i \leq m\, )$ with 
$c_1 \le c_2 \le \cdots \le c_m$. 
If $x_{i,j}-x_{i',j'} \in \Theta_a$ and $i < i'$, then $a_{j-1}-a_{j'-1} = i'-i >0$. 
Since $\{ a_k \}_{k \in \NN}$ is a non-decreasing sequence, we have $j > j'$.    
Hence $\Theta_a$ forms a $\wS/J$-regular sequence 
by the same argument as the proof of Theorem~\ref{main}. 
\end{proof}

\begin{cor}[Murai, {\cite[Proposition~1.9]{M}}]\label{Murai Betti}
Let $I$ be a Borel fixed ideal. Then,   
$$\beta^S_{i,j}(I)=\beta^T_{i,j}(I^{\sigma(a)})$$ 
for all $i,j$.
\end{cor}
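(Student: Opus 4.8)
The plan is to obtain Corollary~\ref{Murai Betti} as an immediate consequence of Proposition~\ref{Kalai-Murai} together with Lemma~\ref{NR}. First I would record that, by Proposition~\ref{Kalai-Murai}, the sequence $\Theta_a$ forms a $\wS/J$-regular sequence, where $J := \BoX(I)$, and that $\wS/(J+(\Theta_a)) \cong T/I^{\sigma(a)}$ via the isomorphism $\psi : \wS/(\Theta_a) \xrightarrow{\sim} T$. Since $\Theta_a$ is also a $\wS$-regular sequence, this says precisely that $J$ is a polarization of $I^{\sigma(a)}$ in the sense of Definition~\ref{pol def}, with $\wS$, $\Theta_a$, $T$, $I^{\sigma(a)}$ playing the roles of $\wS$, $\Theta$, $S$, $I$ there. (One should note that Lemma~\ref{NR}, although stated for $\Theta$, holds for any choice of regular sequence of linear forms cutting $\wS$ down to a polynomial ring, as remarked in the text after its proof.) Hence by the equivalence of (ii) and (ii') in Lemma~\ref{NR},
$$\beta^T_{i,j}(I^{\sigma(a)}) = \beta^{\wS}_{i,j}(J) \qquad \text{for all } i,j.$$

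Second, I would apply exactly the same reasoning to the standard sequence $\Theta = \{x_{i,1}-x_{i,j}\}$: by Theorem~\ref{main}, $J = \BoX(I)$ is a polarization of $I$ itself, so again Lemma~\ref{NR} gives
$$\beta^{\wS}_{i,j}(J) = \beta^S_{i,j}(I) \qquad \text{for all } i,j.$$
Combining the two chains of equalities yields $\beta^S_{i,j}(I) = \beta^{\wS}_{i,j}(J) = \beta^T_{i,j}(I^{\sigma(a)})$, which is the assertion. Alternatively, and perhaps more cleanly, one can avoid invoking Lemma~\ref{NR} twice by observing that the minimal free resolution of $\wS/J$ remains a minimal free resolution after tensoring with $\wS/(\Theta_a)$ (because $\Theta_a$ is a $\wS/J$-regular sequence and consists of linear forms, so no cancellation occurs), giving a minimal free resolution of $T/I^{\sigma(a)}$ with the same graded ranks; the identical argument with $\Theta$ gives a minimal free resolution of $S/I$ with the same ranks, and the Betti numbers of $I$ and $I^{\sigma(a)}$ are just shifts of those of $S/I$ and $T/I^{\sigma(a)}$.

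I do not expect a genuine obstacle here, since all the substantive work — that $\BoX(I)$ is a polarization (Theorem~\ref{main}), that $\wS/\BoX(I)$ is sequentially Cohen-Macaulay with associated primes of the special form $(x_{i,c_i} \mid 1 \le i \le m)$ with $c_1 \le \cdots \le c_m$ (Theorem~\ref{pre main} and the following remark), and that $\Theta_a$ is therefore $\wS/J$-regular (Proposition~\ref{Kalai-Murai}) — is already established. The only point requiring a word of care is the mild generalization of Lemma~\ref{NR} from the specific $\Theta$ to an arbitrary linear regular sequence defining a polynomial quotient; but this is exactly the content of the remark in the excerpt that "while the proof in \cite{NR} concerns only the case $\#\Theta = 1$, it works in the general case," and in fact the resolution-tensoring argument above sidesteps even this. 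So the proof is essentially a two-line deduction, and I would present it as such.
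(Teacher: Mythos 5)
Your proof is correct and is essentially the paper's own argument: the paper likewise equates both sides with $\beta^{\wS}_{i,j}(J)$, the left via Theorem~\ref{main} and the right via Proposition~\ref{Kalai-Murai} (with the Betti-number comparison supplied by the Lemma~\ref{NR} mechanism, or equivalently your resolution-tensoring remark). The only nitpick is that the remark after Lemma~\ref{NR} concerns $\#\Theta>1$ rather than replacing $\Theta$ by a different linear regular sequence such as $\Theta_a$, but as you note the tensoring argument covers that case anyway.
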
 

\begin{proof}
The left (resp. right) side of the equation equals to $\beta_{i,j}^{\wS}(J)$ 
by Theorem~\ref{main} (resp. Proposition~\ref{Kalai-Murai}). 
\end{proof}

The original proof in \cite{M} uses a formula given in \cite{AHH}, and is very different from ours. 

\begin{rem}\label{Murai}
Murai told us that Corollary~\ref{Murai Betti} (i.e., his \cite[Proposition~1.9]{M}) 
can be used to prove  Theorem~\ref{main}. 
In fact, if $a_i = i \cdot n$ for each $i$, $(-)^{\sigma(a)}$ corresponds to our $\BoX$. 
To see this, assign our variable $x_{i,j}$ to his $x_{(j-1)\cdot n+i}$.  
Since $(-)^{\sigma(a)}$ preserves the Betti numbers of a Borel fixed ideal, 
it gives a polarization by Lemma~\ref{NR}. 
%(For general $\{a_i\}_{i \in \NN}$, $\alpha^a$ has no relation to polarization. 
%Our choice of $\{a_i\}_{i \in \NN}$ makes this operator ``polarization-like".) 
However, our proof has advantages as mentioned before Theorem~\ref{mixed}, and 
we can refine Corollary~\ref{Murai Betti} as follows.  
That is, Theorem~\ref{main} (the polarization $\BoX(I)$) and Corollary~\ref{Murai Betti} 
(generalized squarefree operation) imply each other, but our analysis of $\BoX(I)$ 
contains more precise information.  
\end{rem}

\begin{cor}
With the situation of Proposition~\ref{Kalai-Murai}, 
$\Theta_a$ forms an $\Ext_{\wS}^i(\wS/J, \wS)$-regular sequence for all $i$, and 
$$\wS/(\Theta_a) \otimes_{\wS} \Ext_{\wS}^i(\wS/J, \wS)  \cong \Ext_T^i(T/I^{\sigma(a)}, T).$$ 
Hence we have $$\beta_{i,j}^T(\Ext_T^k(T/I^{\sigma(a)}, T)) 
=\beta_{i,j}^S(\Ext_S^k(S/I, S))$$ for all $i,j,k$.  Similarly, $\deg (\Ext_T^i(T/I^{\sigma(a)}, T)) 
=\deg(\Ext_S^i(S/I, S))$ for all $i$, and hence $S/I$ and $T/I^{\sigma(a)}$ have the same arithmetic degree.
\end{cor}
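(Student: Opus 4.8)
The plan is to notice that the single squarefree ideal $J=\BoX(I)\subset\wS$ polarizes \emph{both} $I\subset S$ along $\Theta$ (Theorem~\ref{main}) and $I^{\sigma(a)}\subset T$ along $\Theta_a$ (Proposition~\ref{Kalai-Murai}), and then to run the argument of the paragraph following Definition~\ref{faithful def} in both directions at once, comparing everything through $\wS/J$.

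First I would settle the regular-sequence assertion. By Theorem~\ref{pre main} the module $\wS/J$ is sequentially Cohen-Macaulay, and by Proposition~\ref{Kalai-Murai} the linear forms of $\Theta_a$ form a $\wS/J$-regular sequence. Applying the first assertion of Lemma~\ref{seqCM} repeatedly — at each step the quotient of $\wS/J$ by an initial segment of $\Theta_a$ is again sequentially Cohen-Macaulay, and the next element of $\Theta_a$ is a non-zero-divisor on it, hence on every $\Ext_{\wS}^i(-,\wS)$ of it by Lemma~\ref{seqCM} — shows that $\Theta_a$ is a regular sequence on $\Ext_{\wS}^i(\wS/J,\wS)$ for all $i$; this is precisely the argument of Lemma~\ref{seqCM2}, applied to the linear regular sequence $\Theta_a$ in place of $\Theta$. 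Iterating the isomorphism of Lemma~\ref{seqCM} then yields $\wS/(\Theta_a)\otimes_{\wS}\Ext_{\wS}^i(\wS/J,\wS)\cong\Ext_{\wS}^{i+\#\Theta_a}(\wS/(J+(\Theta_a)),\wS)$, and since $\Theta_a$ is a $\wS$-regular sequence with $\wS/(\Theta_a)\cong T$ and $\wS/(J+(\Theta_a))\cong T/I^{\sigma(a)}$, the change-of-rings isomorphism for $\Ext$ recalled after Definition~\ref{faithful def} identifies the right-hand side with $\Ext_T^i(T/I^{\sigma(a)},T)$. This gives the first two assertions.

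Next I would treat the Betti numbers. Fix $k$ and put $N:=\Ext_{\wS}^k(\wS/J,\wS)$. Both $\Theta$ and $\Theta_a$ consist of linear forms forming regular sequences on $\wS$ and on $N$ — the latter by the faithfulness in Theorem~\ref{main} and by the previous paragraph, respectively. Hence tensoring a minimal graded free resolution of $N$ over $\wS$ with $\wS/(\Theta)$, respectively with $\wS/(\Theta_a)$, produces a minimal graded free resolution of $N/\Theta N$ over $S=\wS/(\Theta)$, respectively of $N/\Theta_a N$ over $T=\wS/(\Theta_a)$, so $\beta^S_{i,j}(N/\Theta N)=\beta^{\wS}_{i,j}(N)=\beta^T_{i,j}(N/\Theta_a N)$. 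By the isomorphisms above, $N/\Theta N\cong\Ext_S^k(S/I,S)$ and $N/\Theta_a N\cong\Ext_T^k(T/I^{\sigma(a)},T)$, whence $\beta^S_{i,j}(\Ext_S^k(S/I,S))=\beta^T_{i,j}(\Ext_T^k(T/I^{\sigma(a)},T))$ for all $i,j,k$. This equality forces the two $\Ext$ modules to have the same numerator $\sum_{i,j}(-1)^i\beta_{i,j}(-)\,t^j$ in their Hilbert series, hence the same multiplicity, i.e. $\deg\Ext_S^k(S/I,S)=\deg\Ext_T^k(T/I^{\sigma(a)},T)$ for all $k$; and, as noted after Definition~\ref{faithful def}, the arithmetic degree of a module over a polynomial ring is determined by the degrees of its $\Ext$ modules into that ring, so $S/I$ and $T/I^{\sigma(a)}$ have the same arithmetic degree.

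I expect no serious obstacle — the argument is bookkeeping on top of Theorems~\ref{pre main} and~\ref{main} and Proposition~\ref{Kalai-Murai}. The one point that wants care is the simultaneous-polarization viewpoint: one must verify that $\Theta_a$, although not of the standard shape $x_{i,1}-x_{i,j}$, is still a regular sequence of linear forms on $\wS$ with quotient $T$, so that both the change-of-rings isomorphism for $\Ext$ and the resolution-descent computation of Betti numbers apply to it verbatim, and that it is the sequential Cohen-Macaulayness of $\wS/J$ itself — not of $S/I$ or of $T/I^{\sigma(a)}$ — that justifies invoking Lemma~\ref{seqCM} along $\Theta_a$.
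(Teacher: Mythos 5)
Your proposal is correct and follows essentially the same route as the paper: iterated use of Lemma~\ref{seqCM} (via the sequential Cohen--Macaulayness of $\wS/J$ and Proposition~\ref{Kalai-Murai}) for the regular-sequence claim, the change-of-rings argument after Definition~\ref{faithful def} for the isomorphism, and the observation that both sides of the Betti-number identity equal $\beta_{i,j}^{\wS}(\Ext_{\wS}^k(\wS/J,\wS))$. You simply spell out details (descent of minimal resolutions along a linear regular sequence, recovery of multiplicity from the $K$-polynomial) that the paper leaves implicit.
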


\begin{proof}
Since $\Theta_a$ is a $\wS/J$-regular sequence and $\wS/J$ is sequentially Cohen-Macaulay, 
the former assertion follows form iterated use of Lemma~\ref{seqCM} 
(see also the argument after Definition~\ref{faithful def}).  
The equation on the Betti numbers holds, since the both sides equal to 
$\beta_{i,j}^{\wS}(\Ext_{\wS}^k(\wS/J, \wS))$. 
The  equations on the degrees can be proved in a similar way. 
\end{proof}

\begin{prop}
If $I \subset S$ is a Borel fixed ideal, then $T/I^{\sigma(a)}$ has a pretty clean filtration.   
In particular, if $I^{\sigma(a)}$ is squarefree (e.g., if $a_{i+1} > a_i$ for all $i$), 
the corresponding simplicial complex of $T/I^{\sigma(a)}$ is non-pure shellable.  
\end{prop}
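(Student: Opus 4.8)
The plan is to obtain the pretty clean filtration of $T/I^{\sigma(a)}$ by transporting the one already constructed on $\wS/J$, where $J:=\BoX(I)$. By Theorem~\ref{pre main} we have a pretty clean (in fact clean) filtration
$$0 \subset J_1/J \subset J_2/J \subset \cdots \subset J_t/J \subset \wS/J$$
with $J_i/J_{i-1} \cong \wS/\fp_i$ (up to degree shift), where each $\fp_i$ is of the form $(x_{k,c_{i,k}} \mid 1 \le k \le l_i)$ with $c_{i,1} \le \cdots \le c_{i,l_i}$, and with $\codim \fp_j \le \codim \fp_i$ for $j \ge i$. By Proposition~\ref{Kalai-Murai}, $\Theta_a$ is a $\wS/J$-regular sequence and $\wS/(\Theta_a) \otimes_{\wS} \wS/J \cong T/I^{\sigma(a)}$ via $\psi$. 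The first step is to check that $\Theta_a$ remains regular on each quotient $\wS/J_i$ appearing in the filtration — this is immediate since each $J_i$ is again of the form $\BoX(I_i)$ for a Borel fixed ideal $I_i$ (this is exactly how the filtration was built in the proof of Theorem~\ref{pre main}), so Proposition~\ref{Kalai-Murai} applies to $I_i$ as well, giving that $\Theta_a$ is $\wS/J_i$-regular for every $i$.

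Next I would tensor the whole filtration with $\wS/(\Theta_a)$. Because $\Theta_a$ is regular on $\wS/J$ and on every $\wS/J_i$, a standard argument (the snake lemma / long exact Tor sequence, or iterated use of the fact that a regular element on the two outer terms of a short exact sequence is regular on the middle and the sequence stays exact after quotienting) shows that applying $-\otimes_{\wS}\wS/(\Theta_a)$ to
$$0 \to J_i/J_{i-1} \to \wS/J_{i-1} \to \wS/J_i \to 0$$
keeps it exact. Hence the images
$$0 \subset \overline{J_1/J} \subset \cdots \subset \overline{J_t/J} \subset \wS/(\Theta_a)\otimes_{\wS}\wS/J \cong T/I^{\sigma(a)}$$
form a filtration of $T/I^{\sigma(a)}$ whose successive quotients are $(\wS/(\Theta_a)) \otimes_{\wS} (\wS/\fp_i) \cong T/\psi(\fp_i)$ (up to degree shift). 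So it remains to verify that each $\psi(\fp_i)$ is a prime ideal of $T$ and that the resulting prime filtration is pretty clean.

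The key observation for primeness is that $\fp_i$ is generated by variables $x_{k, c_{i,k}}$ with indices $k$ in a set of the form $\{1,\ldots,l_i\}$ and with $c_{i,1} \le \cdots \le c_{i,l_i}$. Under $\psi$, $x_{k,c}\mapsto x_{k+a_{c-1}}$, and the monotonicity of $\{a_m\}$ together with the monotonicity of the $c_{i,k}$ in $k$ guarantees that the map $k \mapsto k + a_{c_{i,k}-1}$ is strictly increasing on $\{1,\ldots,l_i\}$ — the same mechanism used in the proof of Proposition~\ref{Kalai-Murai} to show $\Theta_a$ stays regular. Thus $\psi(\fp_i)$ is generated by $l_i$ distinct variables of $T$, hence is prime of codimension $l_i = \codim \fp_i$. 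Since codimensions of the $\fp_i$ are non-increasing in $i$ and $\psi$ preserves codimension here, the codimensions $\codim \psi(\fp_i)$ are also non-increasing in $i$, which by the remark after Definition~\ref{pretty clean} makes the filtration pretty clean. The main obstacle is really this primeness/codimension bookkeeping: one must be careful that although $\psi$ is not injective on all of $\wS$, it does send the particular generating set of $\fp_i$ to distinct variables, so that no collapsing occurs and $\codim$ is preserved. Once that is in hand, the ``in particular'' clause is immediate: when $I^{\sigma(a)}$ is squarefree, a pretty clean filtration of $T/I^{\sigma(a)}$ is automatically clean (by \cite[Corollary~3.5]{HP}), and cleanness of a Stanley--Reisner ring is equivalent to non-pure shellability of the associated simplicial complex, as recorded in the discussion preceding Theorem~\ref{pre main}.
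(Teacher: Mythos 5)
Your proof is correct and follows essentially the same route as the paper: transport the pretty clean filtration of $\wS/\BoX(I)$ from Theorem~\ref{pre main} through the regular sequence $\Theta_a$, checking that each $\psi(\fp_i)$ is a prime of the same codimension. The only (harmless) difference is that the paper tensors the sequences $0 \to J_{i-1}/J \to J_i/J \to \wS/\fp_i \to 0$, needing only that $\Theta_a$ is regular on $\wS/\fp_i$ for $\fp_i \in \Ass(\wS/J)$, whereas you invoke regularity on each $\wS/J_i$ via $J_i = \BoX(I_i)$; your explicit verification that $k \mapsto k + a_{c_{i,k}-1}$ is strictly increasing is exactly the bookkeeping the paper leaves implicit.
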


\begin{proof}
Take the pretty clean filtration 
$0 \subset  J_1/J \subset J_2/J \subset \cdots \subset J_t/J \subset \wS/J$ ($J_0=J$) 
constructed in the proof of Theorem~\ref{pre main}. 
Recall that  $J_i/J_{i-1} \cong \wS/\fp_i$ up to degree shift for each $i \geq 1$. 
Since $\fp_i \in \Ass(\wS/J)$, $\Theta_a$ forms a $\wS/\fp_i$-regular sequence 
by the same argument as in the proof of Proposition~\ref{Kalai-Murai}.  
Moreover,  $\wS/(\Theta_a) \otimes_{\wS} \wS/\fp_i \cong T/\fq_i$ for some prime ideal $\fq_i \subset T$ with 
$\codim \fp_i =\codim \fq_i$.  From the exact sequence 
$0 \to J_{i-1}/J \to J_i/J \to \wS/\fp_i \to 0$, we have the exact sequence 
$$0 \to \wS/(\Theta_a) \otimes_{\wS} J_{i-1}/J \to \wS/(\Theta_a) \otimes_{\wS} J_i/J \to 
\wS/(\Theta_a) \otimes_{\wS} \wS/\fp_i \to 0$$
by \cite[Proposition~1.1.4]{BH}. Set $M_i:= \wS/(\Theta_a) \otimes_{\wS} J_i/J_{i-1}$. Then 
$0 \subset M_1 \subset \cdots \subset M_t \subset T/I^{\sigma(a)}$ is a pretty clean 
filtration.  
\end{proof}

\section*{Acknowledgment}
The study of this subject started when I visited the Mathematical Institute of 
the University of Bergen. I am grateful for the warm hospitality. 
I thank Professor Gunnar Fl\o ystad and Doctor Henning Lohne for stimulating discussion. 
I also thank Professor Satoshi Murai for telling me the relation to his work \cite{M}.  
Finally,  I wish to thank  anonymous referee for his/her careful reading of this manuscript. 

\end{document}